\def\emph#1{{\it #1}}
\newtheorem{thm}{Theorem}[section]
\newtheorem{cor}[thm]{Corollary}
\newtheorem{lem}[thm]{Lemma}
\theoremstyle{definition}
\newtheorem{defin}[thm]{Definition}
\newtheorem{rem}[thm]{Remark}
\newtheorem{exa}[thm]{Example}
\newtheorem{notation}[thm]{Notation}
\newtheorem{proposition}[thm]{Proposition}
\numberwithin{equation}{section}
\newcommand{\Tmc}{T_{\!\rm mc}}
\newcommand{\Aut}{{\textrm{Aut}}}
\newcommand{\E}{\exists}
\newcommand{\A}{\forall}
\newcommand{\Exp}{{\textrm{Exp}}}
\newcommand{\imp}{\rightarrow}
\renewcommand{\iff}{\leftrightarrow}
\newcommand{\dom}{\mathop\textrm{dom}}
\newcommand{\range}{\mathop\textrm{rng}}
\newcommand{\IFF}{\Leftrightarrow}
\newcommand{\IMP}{\Rightarrow}
\newcommand{\acl}{\mathord{\textrm{acl}}}
\newcommand{\dg}{\mathord\textrm{diag}}
\newcommand{\tp}{\mathop\textrm{tp}}
\renewcommand{\phi}{\varphi}
\renewenvironment{itemize}
  {\begin{list}{$\triangleright$}{%
   \setlength{\parskip}{0mm}
   \setlength{\topsep}{.5\baselineskip}
   \setlength{\rightmargin}{0mm}
   \setlength{\listparindent}{0mm}
   \setlength{\itemindent}{0mm}
   \setlength{\labelwidth}{3ex}
   \setlength{\itemsep}{.5\baselineskip}
   \setlength{\parsep}{0mm}
   \setlength{\partopsep}{0mm}
   \setlength{\labelsep}{1ex}
   \setlength{\leftmargin}{\labelwidth+\labelsep}
   }}{%
   \end{list}\vspace*{-.5\baselineskip}}
\begin{document}
\title[Generic expansions]{Generic expansions of countable models}

\author[S. Barbina]{Silvia Barbina}
\thanks{The first author gratefully acknowledges support by the Commission of the European Union under contract MEIF-CT-2005-023302 `Reconstruction and generic automorphisms'.}
\address{Centro Internazionale per la Ricerca Matematica\\
Fondazione Bruno Kessler\\
Via Sommarive 14, Povo, 38123 Trento, Italy
}
\email{silvia.barbina@gmail.com}

\author[D. Zambella]{Domenico Zambella}
\address{Dipartimento di Matematica\\Universit\`a di Torino\\via Carlo Alberto 10, 10123 Torino, Italy}
\email{domenico.zambella@unito.it}

\date{}
\begin{abstract}
We compare two different notions of generic expansions of countable saturated structures. One kind of genericity is related to existential closure, another is defined via topological properties and Baire category theory. The second type of genericity was first formulated by Truss for automorphisms. We work with a later generalization,  due to Ivanov~\cite{ivanov}, to finite tuples of predicates and functions.

Let $N$ be a countable saturated model of some complete theory $T$, and let  $(N,\sigma)$ denote an expansion of $N$ to the signature $L_0$ which is a model of some universal theory $T_0$. We prove that when all e.c.\@ models of $T_0$ have the same existential theory, $(N,\sigma)$ is Truss generic if and only if $(N,\sigma)$ is an \textit{e-atomic} model. When $T$ is $\omega$--categorical and $T_0$ has a model companion $\Tmc$, the e-atomic models are simply the atomic models of $\Tmc$.
\end{abstract}

\subjclass[2000]{Primary 03C10; Secondary 20B27, 03C50}
\keywords{Generic automorphism, existentially closed structure, comeagre conjugacy class}
\maketitle

\section{Introduction}

In model theory there are two main notions of a generic automorphism of a structure. 
In some cases, the automorphisms that one obtains through these notions are similar enough that it is natural to ask whether, and how, they are related.

Let $T$ be a theory with quantifier elimination in a language $L$. Let $L_0=L\cup\{f\}$, where $f$ is a unary function symbol. Let $T_0$ be $T$ together with the sentences which say that $f$ is an automorphism. For a model $M$ of $T$ and $f \in \Aut(M)$, we say that $f$ is generic if $(M, f)$ is an existentially closed model of $T_0$ \cite{kikyo}.

This notion of genericity first appeared in~\cite{lascar}, where Lascar constructs some models of $T_0$ that have certain properties of universality and homogeneity. Later this became relevant to work on expansions of structures via an automorphism, mainly in the case of algebraically closed fields ~\cite{chhr, chpi}. In a series of papers (notably~\cite{chpi}, see also e.g.~\cite{kikyo},~\cite{kumac}, ~\cite{balshel}) conditions are given for $T_0$ to have a model companion $\Tmc$, describing the best case scenario where the e.c. models of $T_0$ are an elementary class.

A second notion of genericity was introduced by Truss in~\cite{truss}. An automorphism of a countable structure $M$ is Truss generic if its conjugacy class is comeagre in the canonical topology on the automorphism group $\Aut(M)$. More generally, a tuple $(f_1, \ldots, f_n) \in \Aut(M)^n$ is generic in this sense if $\{(f_1^g, \ldots, f_n^g) : g \in \Aut(M)\}$ is comeagre in the product space $\Aut(M)^n$. The intuition underlying this definition is that a generic automorphism should exhibit any finite behaviour that is consistent in the structure, modulo conjugacy. This is reminiscent of an existential closure condition, and suggests that a comparison with genericity \`a la Lascar is meaningful. Several related notions of generic automorphism are described --- and the relationship among some of them is investigated --- in~\cite{truss2}.

Truss generic automorphisms populate rather different habitats. Generic tuples are a useful tool in the two main techniques for reconstructing $\omega$--categorical structures from their automorphism group, namely, the small index property~\cite{lascar1} and Rubin's weak $\A\E$-interpretations ~\cite{rubin} (see e.g.~\cite{hhls} and~\cite{barmac} for specific applications of Truss generics). The existence of a comeagre conjugacy class is interesting in its own right: for an $\omega$-categorical structure $M$, it implies that $\Aut(M)$ cannot be written non trivially as a free product with amalgamation \cite{macthomas}. Ivanov~\cite{ivanov} isolates conditions under which a countable $\omega$-categorical structure has a Truss generic automorphism or tuple. In ~\cite{keros}, Kechris and Rosendal isolate conditions of this kind in the more general case of countable homogeneous structures and prove a wealth of topological consequences in Polish groups.

Ivanov generalises Truss genericity so that it applies to predicates, and indeed to arbitrary finite signatures~\cite{ivanov}. His work concerns generic expansions of $\omega$-categorical structures. One application is to the semantics of generalized quantifiers in the context of second-order logic. Lascar genericity, too, applies  to predicates: in~\cite{chpi} the authors show that for a complete $L$-theory $T$, $L_0=L\cup\{r\}$, where $r$ is a unary relation and $T_0=T$, $T_0$ has a model companion if and only if $T$ eliminates the $\E^\infty$ quantifier. Therefore it makes sense to extend the comparison to expansions of a structure by a finite tuple of predicates and functions, rather than simply by an automorphism.

In~\cite{ivanov} the structures considered are models of $\omega$-categorical theories. In ~\cite{keros} they are locally finite ultrahomogeneous structures. In order to provide a suitable framework for a comparison with generics \`a la Lascar, we require the base theory $T$ to be small and to have quantifier elimination. The latter
assumption is not essential but it streamlines a few definitions and it is standard in~\cite{chpi, kikyo, kikyoshelah}. We consider an expansion $T_0$ of $T$ in a language where finitely many predicate and function symbols are added. When $L_0 = L \cup \{ f \}$, where $f$ is a unary function symbol, and $T_0$ says that $f$ is an automorphism, the setting is as in~\cite{chpi, kikyo, kikyoshelah}. For our main results we require the e.c. models of $T_0$ to have the same existential theory (this is true in particular when $T_0$ has a model companion which is a complete theory).  While this assumption is more restrictive than in~\cite{keros} and, modulo $\omega$-categoricity, \cite{ivanov}, it allows us to replace Fra\"{\i}ss\'e limits with existentially closed models.

We work with a given countable saturated model $N\models T$ and we consider the set $\Exp(N,T_0)$ of expansions of $N$ that model $T_0$. We endow $\Exp(N,T_0)$ with the topology  in~\cite{ivanov}, a natural generalisation of the canonical topology on $\Aut(N)$, which makes  $\Exp(N,T_0)$ a Baire space.

In Section~\ref{baire} we define a subspace of $\Exp(N,T_0)$ which will later turn out to contain the Truss generic expansions. We define a set of `slightly saturated' expansions of $N$ which we call {\it smooth\/}. A smooth expansion of $N$ realizes all the types of the form \\
\centerline{($*$)\qquad $p_{\restriction L}(x) \cup \{\phi(x)\}$,}\\ where $p_{\restriction L}(x)$ is a type in the base language $L$ and $\phi(x)$ is a quantifier-free formula in the expanded language $L_0$. We prove that smooth expansions are a comeagre subset of $\Exp(N,T_0)$. The set of e.c. expansions is also comeagre, so that the smooth e.c. expansions form a Baire space in their own right.  

In Section~\ref{truss} we define {\it e-atomic} expansions. An e-atomic expansion is existentially closed, smooth, and only realizes $p(x)$ if $p_{\restriction\A}(x) \cup p_{\restriction\E}(x)$ is isolated by types of the form $\E y\, q(x,y)$, where $q(x,y)$ is as in ($*$). We show that the e-atomic expansions are exactly the expansions that are generic in the sense of~\cite{truss}. When $T$ is $\omega$-categorical and $\Tmc$ exists, this amounts to showing that the Truss generic expansions are the atomic models of $\Tmc$.

Our original purpose was to describe the role of Truss generic automorphisms among existentially closed models of $T_0$ when $T_0$ is as in ~\cite{chpi}. While both~\cite{ivanov} and~\cite{keros} work within the framework of amalgamation classes, our motivation led to a different approach and, occasionally, to some duplication of results in~\cite{ivanov} and~\cite{keros} under different assumptions. However, we have kept our version as it is functional to our comparison between notions of genericity. 

As remarked by the anonymous referee, some of our results appear with different terminology in~\cite{hodges1}, where the approach is that of Robinson forcing, so that `enforceable' corresponds to `comeagre' in our context. For a smoother comparison with~\cite{hodges1} one should take our $L$ to be empty and let $T$ be the theory of a pure infinite set. The Henkin constants play the role of the model $N$ in our context. Then the notion of $\exists$-atomic model translates to our \textit{e}-atomic. With this dictionary in mind, the reader may compare Lemma~\ref{lemmaec} with Corollary 3.4.3 of~\cite{hodges1} and Theorem~\ref{atomicrichcomeagre} with Theorem~4.2.6 (cf. also Theorem~5.1.6) of~\cite{hodges1}.

The first author is grateful to Alexander Berenstein for helpful initial remarks, and to Enrique Casanovas and Dugald Macpherson for useful conversation.

We thank the referee for several pivotal remarks and for pointing out some inaccuracies in earlier versions of the paper.

\section{Baire categories of first-order expansions}%
\label{baire}

Let $T$ be a complete theory with quantifier elimination in the countable language $L$.  Let $L_0$ be the language $L$ enriched with finitely many new relation and function symbols. We shall denote a structure of signature $L_0$ by a pair $(N,\sigma)$, where $N$ is a structure of signature $L$ and $\sigma$ is the interpretation of the symbols in $L_0 \smallsetminus L$. 

Let $T_0$ be any theory of signature $L_0$ containing $T$. We define
$$
\Exp(N,T_0)\ \ :=\ \ \Big\{\sigma \ :\ (N,\sigma)\models T_0\Big\}.
$$
We write $\Exp(N)$ for $\Exp(N,T)$.

There is a canonical topology on $\Exp(N)$, cf.~\cite{ivanov}, which makes it a Baire space. The purpose of this section is to define a subspace $Y$ of $\Exp(N)$, that of \textit{smooth, e-atomic} expansions, which is itself a Baire space and which in Section \ref{truss} proves significant for the relationship between Truss and Lascar generic expansions.

For a sentence $\phi$ with parameters in $N$ we define \emph{$[\phi]_N$} $:=$ $\{\sigma: (N,\sigma)\models\phi\}$. The topology on $\Exp(N)$ is generated by the open sets of the form $[\phi]_N$ where $\phi$ is quantifier-free. When $N$ is countable, this topology is completely metrizable: fix an enumeration $\{a_i:i\in\omega\}$ of $N$, define $d(\sigma,\tau)=2^{-n}$, where $n$ is the largest natural number such that for every tuple $a$ in $\{a_0,\dots,a_{n-1}\}$ and any symbols $r,f$ in $L_0\smallsetminus L$,
$$
a\in r^\sigma \Leftrightarrow a\in r^\tau\ \ \ \textrm{and}\ \ \  f^\sigma(a)= f^\tau(a),
$$
where $r^\sigma$ is the interpretation of $r$ in $(N, \sigma)$. When such an $n$ does not exist, $d(\sigma,\tau)=0$.

The reader may easily verify that this metric is complete. We check that it induces the topology defined above. Fix $n$ and $\tau$. Let $\phi$ be the conjunction of the formulas of the form $f a=b$ and $r a$ which hold in $(N,\tau)$ for some $b\in N$ and some tuple $a$ from $\{a_0,\dots,a_n\}$. Then
$$
[\phi]_N\ =\ \{\sigma : d(\sigma,\tau)< 2^{-n}\}.
$$ 
Conversely, let $\phi$ be a quantifier-free sentence with parameters in $N$, and take an arbitrary $\tau\in[\phi]_N$. Let $A$ be the set of parameters occurring in $\phi$. Let $n$ be large enough that
$$
\{ t^\tau(a) : a\subseteq A \textrm{ and } t \textrm{ is a subterm of a term appearing in } \phi  \} \ \ \subseteq\ \ \{a_0,\dots,a_{n-1}\}.
$$
Clearly $(N,\sigma)\models\phi$ for any $\sigma$ at distance $<2^{-n}$ from $\tau$ so
$$
\{\sigma : d(\sigma,\tau)< 2^{-n}\} \subseteq\ [\phi]_N\
$$ 
as required.

If $g:M\imp N$ is an isomorphism and $\sigma\in\Exp(M)$ we write $\sigma^g$ for the unique expansion of $N$ that makes $g:(M,\sigma)\imp (N,\sigma^g)$ an isomorphism. Explicitly, for every predicate $r$, every function $f$ in $L_0\smallsetminus L$, and every tuple $a\in N$,
$$
\llap{$(N,\sigma^g)\models r\, a$}\ \ \ \ \IFF\ \ \ \ \rlap{$(M,\sigma)\models r\, g^{-1}a$}
$$
$$
\llap{$(N,\sigma^g)\models f\, a= b$}\ \ \ \ \IFF\ \ \ \ \rlap{$(M,\sigma)\models f\,g^{-1}a =g^{-1}b$}
$$
We write $T_{0,\A}$ for the set of consequences of $T_0$ that are universal modulo $T$ (i.e. equivalent to a universal sentence in every model of $T$). Then
$$\Exp(N,T_0) \subseteq \Exp(N,T_{0,\A}) \subseteq \Exp(N).$$

\begin{notation}\label{notation1}
For the rest of this section we assume $T$ to be small and fix some $N$, a countable saturated model of $T$. We shall often avoid the distinction between the expansion $\sigma\in\Exp(N)$ and the model $(N,\sigma)$.
\end{notation}

\begin{lem}\label{closed=universal} 
Let $T_0$ be an arbitrary expansion of $T$ to the signature $L_0$. Then $\Exp(N,T_{0,\A})$ is the closure of $\Exp(N,T_0)$ in the above topology.
\end{lem}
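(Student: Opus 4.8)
The plan is to prove the two inclusions $\overline{\Exp(N,T_0)}\subseteq\Exp(N,T_{0,\A})$ and $\overline{\Exp(N,T_0)}\supseteq\Exp(N,T_{0,\A})$ separately, the first being soft and the second carrying the real content. For the first, I would check that $\Exp(N,T_{0,\A})$ is \emph{closed}. Each $\psi\in T_{0,\A}$ is, in every model of $T$ and in particular in every $(N,\sigma)$, equivalent to a universal sentence $\A\bar x\,\chi(\bar x)$ with $\chi$ quantifier-free, so $[\psi]_N=\bigcap_{\bar m}[\chi(\bar m)]_N$ is an intersection of basic clopen sets and hence closed, and therefore $\Exp(N,T_{0,\A})=\bigcap_{\psi\in T_{0,\A}}[\psi]_N$ is closed. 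Since $\Exp(N,T_0)\subseteq\Exp(N,T_{0,\A})$ is already noted, the closure of $\Exp(N,T_0)$ is contained in $\Exp(N,T_{0,\A})$.

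For the reverse inclusion it suffices to show that every nonempty basic open subset of $\Exp(N,T_{0,\A})$ meets $\Exp(N,T_0)$. So I would fix $\sigma\in\Exp(N,T_{0,\A})$ and a basic neighbourhood $[\phi]_N\ni\sigma$, with $\phi=\phi(\bar a)$ quantifier-free and $\bar a$ a tuple from $N$, and seek $\tau\in\Exp(N,T_0)$ with $(N,\tau)\models\phi(\bar a)$. The conceptual core is the claim that $T_0\cup p(\bar x)\cup\{\phi(\bar x)\}$ is consistent, where $p(\bar x)=\tp(\bar a)$ is the complete $L$-type of $\bar a$ over $\emptyset$. I would argue by contradiction: if it were inconsistent, compactness together with quantifier elimination for $T$ would yield a quantifier-free $\theta(\bar x)\in p$ with $T_0\models\A\bar x\,(\theta(\bar x)\imp\neg\phi(\bar x))$. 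This sentence is universal, hence a member of $T_{0,\A}$, so it holds in $(N,\sigma)$; but $(N,\sigma)\models\theta(\bar a)$ because $\theta\in\tp(\bar a)$, and $(N,\sigma)\models\phi(\bar a)$ by choice of the neighbourhood, a contradiction.

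Granting consistency, the remaining and main difficulty is to realize this type as a \emph{total} expansion of the fixed model $N$, not merely in some model of $T_0$; because $L_0\smallsetminus L$ contains function symbols, one cannot simply restrict a large model of $T_0$ to $N$. Here saturation enters decisively. Starting from a model $(M_0,\tau_0)\models T_0$ carrying a tuple $\bar b$ that realizes $p$ with $(M_0,\tau_0)\models\phi(\bar b)$, I would build an $L_0$-elementary chain whose successive terms realize all complete $L$-types over finite subsets of the preceding term; this is possible since $T$ is small, so there are only countably many such types, and each, being an $L$-type over a finite subset, is consistent with the $L_0$-elementary diagram of the current term. The countable union $(M,\tau)$ then models $T_0$, satisfies $\phi(\bar b)$, and has an $\omega$-saturated $L$-reduct by the standard elementary-chain argument, so $M_{\restriction L}$ is isomorphic to $N$; and since $\tp(\bar b)=\tp(\bar a)=p$ and countable saturated models are homogeneous, such an isomorphism can be chosen to send $\bar b$ to $\bar a$. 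Transporting $\tau$ along it yields the desired $\tau\in\Exp(N,T_0)\cap[\phi]_N$. I expect this descent to a total expansion of $N$ --- in effect, the resplendence of the countable saturated model --- to be the step demanding the most care, the consistency argument of the previous paragraph being its key input.
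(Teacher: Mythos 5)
Your proof is correct, and it shares the paper's overall skeleton: the easy inclusion comes from the fact that a failed axiom of $T_{0,\A}$ is witnessed by a basic (cl)open set, and the hard inclusion from producing a countable model of $T_0$ whose $L$-reduct is saturated and carries the relevant finite configuration, then transporting it onto $N$ via an isomorphism given by q.e.\ and the uniqueness/homogeneity of the countable saturated model. Still, your decomposition of the hard direction differs in two ways worth recording. First, where the paper applies the embedding criterion to the whole structure --- $(N,\tau)$ models the universal consequences of $T_0$, hence embeds in some $(N',\tau')\models T_0$, which then satisfies $\phi$ automatically --- you keep only the finite data, proving consistency of $T_0\cup\tp(\bar a)\cup\{\phi(\bar a)\}$ by a localized compactness-plus-q.e.\ argument; the two devices are interchangeable here. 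Second, and more substantially, the paper simply asserts ``we may assume that $N'$ is countable and saturated (in $L$)'', whereas you actually prove this, via an $L_0$-elementary chain realizing all complete $L$-types over finite subsets, with smallness of $T$ keeping every stage countable. That chain argument is precisely the missing justification of the paper's ``may assume'': one cannot get it by passing to a countable elementary substructure of a large saturated extension, since such substructures need not be saturated. So the paper's route buys brevity, while yours buys a self-contained treatment of the one step where saturation of the $L$-reduct genuinely has to be manufactured. One point to tighten: your reason that each complete $L$-type over a finite subset of the current term is consistent with its $L_0$-elementary diagram should be finite satisfiability in that term (each formula's existential closure lies in the complete $L$-theory with those parameters); it is this, rather than the finiteness of the parameter set as such, that makes the claim true --- finiteness matters only for the counting.
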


\begin{proof} 
Let $\tau\in\Exp(N,T_{0,\A})$. We claim that $\tau$ is adherent to $\Exp(N,T_0)$.  Let $[\phi]_N$ be an arbitrary basic open set containing $\tau$. As $(N,\tau)$ models the universal consequences of $T_0$, there exists some $(N',\tau')\models T_0$ such that $(N,\tau)\subseteq(N',\tau')$. Let $A\subseteq N$ be the set of parameters occurring in $\phi$.  We may assume that $N'$ is countable and saturated (in $L$), therefore by q.e. in $L$ it is isomorphic to $N$ over $A$, so $[\phi]_N$ contains some element of $\Exp(N,T_0)$.

Conversely, suppose that $\tau\notin\Exp(N,T_{0,\A})$. Then for some parameter- and quantifier-free formula $\phi(x)$ we have $T_0\vdash\A x\,\phi(x)$ and $(N,\tau)\models \neg\phi(a)$. Then the open set $[\neg\phi(a)]_N$ separates $\tau$ from $\Exp(N,T_0)$.
\end{proof} 

\begin{notation}\label{notation2}
For the rest of this section we fix a theory $T_0$ that is universal modulo $T$, so that, by Lemma \ref{closed=universal}, $\Exp(N,T_0)$ is a closed subset of $\Exp(N)$, hence it is complete (as a metrizable space). If not otherwise specified, expansions $\sigma$, $\tau,$ etc.\@ range over $\Exp(N,T_0)$.\end{notation}

We say that $\sigma$ is \emph{existentially closed\/}, or e.c., if every quantifier-free $L_0$-formula with parameters in $N$ that has a solution in some $(U,\upsilon)$ such that $(N,\sigma)\subseteq(U,\upsilon)\models T_0$, has a solution in $(N,\sigma)$.

\begin{lem}\label{lemmaec}
The set of existentially closed expansions is comeagre in $\Exp(N,T_0)$.
\end{lem}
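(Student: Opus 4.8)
The plan is to exhibit the e.c.\ expansions as a countable intersection of dense open sets. For each quantifier-free $L_0$-formula $\phi(x)$ with parameters in $N$ --- there are only countably many, since $N$ and $L_0$ are countable --- put
\[
U_\phi\ :=\ \{\sigma\in\Exp(N,T_0) : (N,\sigma)\models\exists x\,\phi(x)\}\ =\ \bigcup_{a\in N}[\phi(a)]_N ,
\]
which is open. Setting $G_\phi := U_\phi\cup\big(\Exp(N,T_0)\setminus\overline{U_\phi}\big)$, the complement of $G_\phi$ is the boundary of the open set $U_\phi$, hence closed with empty interior, so each $G_\phi$ is dense open. Since $\Exp(N,T_0)$ is a Baire space (Notation~\ref{notation2}), $\bigcap_\phi G_\phi$ is comeagre, so it suffices to check that every element of $\bigcap_\phi G_\phi$ is e.c. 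Unwinding the definition, $\sigma$ is e.c.\ exactly when, for every such $\phi$, solvability of $\phi$ in some $T_0$-extension of $(N,\sigma)$ entails solvability in $(N,\sigma)$; thus it is enough to prove that $G_\phi$ is contained in this last set for each $\phi$.

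Because $U_\phi\subseteq G_\phi$ already handles the case where $\phi$ is solved in $(N,\sigma)$ itself, everything reduces to the following claim: if $\phi$ is solvable in some $(U,\upsilon)\models T_0$ with $(N,\sigma)\subseteq(U,\upsilon)$, then $\sigma\in\overline{U_\phi}$. So I must produce, in every basic neighbourhood of $\sigma$, an expansion that actually solves $\phi$ inside $N$. A basic neighbourhood constrains $\sigma$ only on a finite set $W\subseteq N$ of new-relation memberships and new-function values, so the task is to transport the external solution $c$ back into the fixed model $N$ while disturbing $\sigma$ only off $W$, and --- crucially --- to arrange that the result is again a model of all of $T_0$, not merely of the substructure carrying the solution.

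The main obstacle is precisely this last point: a naive pullback only yields a $T_0$-expansion of a \emph{substructure} of $N$, and since $T_0$ is merely universal modulo $T$, there is no reason an expansion of a proper substructure should extend to a $T_0$-expansion of all of $N$. I would remove the obstacle by first fattening the witness. Passing to a countable $(U,\upsilon)$ by L\"owenheim--Skolem, I claim $(U,\upsilon)$ embeds into some $(V,\upsilon')\models T_0$ whose $L$-reduct is countable and saturated. This is a compactness argument: fixing a countable saturated $L$-extension $\hat U\succeq U$ (available since $T$ is small and complete) and working in $T_0\cup\operatorname{diag}_{L_0}(U,\upsilon)\cup\operatorname{ediag}_L(\hat U)$, every finite fragment is satisfied in $(U,\upsilon)$ itself after reinterpreting the finitely many new $L$-constants by witnesses pulled back through the elementary inclusion $U\preceq\hat U$; restricting a model to a copy of $\hat U$ gives the desired $(V,\upsilon')$. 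Since $\phi$ is quantifier-free, its solution survives in $(V,\upsilon')$.

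Now $V$ is isomorphic to $N$ as an $L$-structure, which is what makes the pullback \emph{surjective} and hence kills the extension problem. After enlarging $W$ to contain the finitely many new-function values read off $\sigma$, the identity on $W$ is partial elementary (here quantifier elimination in $L$ is used), so back-and-forth between the two countable saturated models $V$ and $N$ extends it to an $L$-isomorphism $g\colon V\to N$ fixing $W$ pointwise. Then $\sigma':=(\upsilon')^g$ is a genuine element of $\Exp(N,T_0)$, it agrees with $\sigma$ on $W$ because $g$ fixes $W$ and $(N,\sigma)\subseteq(V,\upsilon')$, and it solves $\phi$ because $g$ carries the solution in $V$ to one in $N$ while fixing the parameters of $\phi$. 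Thus $\sigma'$ lies in the prescribed neighbourhood and in $U_\phi$, giving $\sigma\in\overline{U_\phi}$ and completing the argument.
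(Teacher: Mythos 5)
Your skeleton is sound and in fact parallels the paper's proof: the paper also works formula by formula with a two-part open set (the set where $\exists x\,\psi$ is solved, together with the set of $\sigma$ admitting \emph{no} $T_0$-extension solving $\psi$), and its density argument is exactly your pullback device --- realize the formula in a model of $T_0$ whose $L$-reduct is countable and saturated, then use quantifier elimination and back-and-forth to transport the solution into $N$. The difference is that your choice of second set, $\Exp(N,T_0)\smallsetminus\overline{U_\phi}$, makes density automatic but forces you to prove a stronger, \emph{relative} statement: solvability of $\phi$ in an extension of the given $(N,\sigma)$ must imply $\sigma\in\overline{U_\phi}$. The paper only needs to produce \emph{some} expansion in a basic open set solving $\psi$, built from scratch out of any model of $T_0\cup\{\phi\wedge\exists x\,\psi\}$; you need to fatten an extension of $(N,\sigma)$ itself, and that is where your argument breaks.

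The genuine gap is the ``fattening'' step. In this paper $L_0\smallsetminus L$ contains function symbols (this is the motivating case: automorphisms), and the copy of $\hat U$ inside your compactness model $M^*\models T_0\cup\mathrm{diag}_{L_0}(U,\upsilon)\cup\mathrm{ediag}_L(\hat U)$ need not be closed under the new functions, so ``restricting'' the $L_0$-structure of $M^*$ to that copy does not yield an $L_0$-structure at all. Passing to the $L_0$-substructure of $M^*$ generated by the copy does not help: quantifier elimination does not make substructures of models of $T$ into models of $T$ (think of algebraically closed fields), so that substructure need not satisfy $T$, hence ``universal modulo $T$'' cannot be applied to conclude it models $T_0$, and its $L$-reduct need not be saturated. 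The fact you want is true, but the standard proof is different: take an $\omega$-saturated $L_0$-elementary extension of $(U,\upsilon)$ --- its $L$-reduct is then automatically $\omega$-saturated --- and extract by Tarski--Vaught a countable $L_0$-elementary substructure containing $U$ whose $L$-reduct realizes all $L$-types over each of its finite subsets; this chain construction is possible precisely because $T$ is small, so there are only countably many such types. (Alternatively, iterate your compactness step $\omega$ times and take the union of the resulting chain: universal-modulo-$T$ sentences survive unions of chains whose $L$-reducts are elementary.) This is exactly the standard fact the paper invokes silently with ``$U$ can be chosen to be countable and $L$-saturated''; citing it, instead of the one-step restriction argument, repairs your proof. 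One further small omission: you must also enlarge $W$ to contain the parameters of $\phi$, otherwise nothing forces $g$ to fix them and $g(c)$ need not solve $\phi$ in $(N,\sigma')$; that fix is trivial.
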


\begin{proof} 
Let $\psi(x)$ be a quantifier-free formula with parameters in $N$. We show that the following set is open dense:\medskip

\noindent($\star$)\hfil$\Big\{\sigma\ :\  (N,\sigma)\models\E x\psi(x) \Big\}
\cup
\Big\{\sigma\ :\ (U,\upsilon)\nvDash\E x\,\psi(x) \textrm{ for every } (N,\sigma)\subseteq(U,\upsilon)\models T_0\Big\}$.\medskip

The set of existentially closed expansions is the intersection of these sets as $\psi(x)$ ranges over the quantifier-free formulas of $L_0$. So the lemma follows.

It is clear that the first set in ($\star$) above is a union of basic open sets. For openness of the second set, suppose that $\sigma$ is such that there is no extension $(U, \upsilon) \vDash T_0 \cup \{ \E x\,\psi(x)\}$. Then $\mathrm{Diag}(N, \sigma) \cup T_0 \cup \{ \E x\,\psi(x)\}$ is inconsistent, hence by compactness there is $\chi \in  \mathrm{Diag}(N, \sigma)$ such that $T_0\models\chi \rightarrow \neg \E x\,\psi(x)$. Then $[\chi]_N$ is a neighbourhood of $\sigma$ contained in the second set in ($\star$).


For density, fix a basic open $[\phi]_N$ and consider the theory $T_0\cup\{\phi\wedge\E x\,\psi(x)\}$. If this theory is inconsistent then $[\phi]_N$ is contained in the second set in ($\star$). Otherwise it has a model $(U,\upsilon)$. As $U$ can be chosen to be countable and $L$-saturated, by q.e. in $L$ there is an $L$-isomorphism $g : U \mapsto N$ which fixes the parameters of $\phi\wedge\E x\,\psi(x)$. Then $\psi(x)$ has a solution in $(U^g,\upsilon^g)$, hence the first set in ($\star$) intersects $[\phi]_N$ in $\upsilon^g$.
\end{proof}

\begin{exa}\label{excompletesmall} Let $T$ be any complete small theory with quantifier elimination in the language $L$. Let $L_0\smallsetminus L$ contain only a unary relation symbol $r$ and let $T_0=T$. In~\cite{chpi} the authors prove that if $T$ eliminates the $\E^{\infty}$ quantifier, then $T_0$ has a model companion $\Tmc$. By Lemma~\ref{lemmaec}, $\Exp(N,\Tmc)$ is comeagre.
\end{exa}

\begin{exa}\label{PAPA} Let $T$ and $L$ be as in Example \ref{excompletesmall}. Let $L_0\smallsetminus L$ contain two unary function symbols $f$ and $f^{-1}$ and let $T_0$ be $T$ together with a sentence which says that $f$ is an automorphism with inverse $f^{-1}$. We need a symbol for the inverse of $f$ because we want $T_0$ to be universal. It is considerably more difficult than in Example \ref{excompletesmall} to find a condition which guarantees the existence of a model companion of $T_0$ ~\cite{balshel}. An important example where the model companion of $T_0$ exists is the case where $T$ is the theory of algebraically closed fields~\cite{chhr}. Then $\Tmc$ is also known as ACFA. Let $N$ be a countable algebraically closed field of infinite transcendence degree. By Lemma~\ref{lemmaec}, $\Exp(N,\Tmc)$ is comeagre.\end{exa}

\begin{defin}\label{defsmooth} We say that $\sigma$ is a \emph{smooth expansion\/} if $(N,\sigma)$ realizes every finitely consistent type of the form $p_{\restriction L}(x)\wedge\psi(x)$ where $\psi(x)$ is quantifier-free and $p_{\restriction L}(x)$ is a type in $L$ with finitely many parameters.\end{defin}

When $T$ is $\omega$-categorical, any expansion is smooth. For an example of an expansion that is {\it not\/} smooth, let $T$ be the theory of the algebraically closed fields of some fixed characteristic and let $N$ be an algebraically closed field of infinite transcendence degree. Expand $N$ by a relation $r(x)$ which holds exactly for the elements of $\acl(\varnothing)$. Then $(N, r)$ is not smooth.

\begin{lem}\label{gamesmooth}
The set of smooth expansions is comeagre in $\Exp(N,T_0)$.
\end{lem}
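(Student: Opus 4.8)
The plan is to exhibit the set of smooth expansions as a countable intersection of open dense subsets of the complete space $\Exp(N,T_0)$ (Notation~\ref{notation2}), in the spirit of the proof of Lemma~\ref{lemmaec}. Since $T$ is small and $N$ is countable, there are only countably many finite parameter sets $A\subseteq N$, only countably many complete $L$-types $p_{\restriction L}(x)$ over each such $A$, and only countably many quantifier-free $L_0$-formulas $\psi(x)$ with parameters in $N$; hence only countably many relevant pairs $(p_{\restriction L},\psi)$. (It suffices to consider complete $p_{\restriction L}$: a finitely consistent type with partial $L$-part has a completion that is still finitely consistent, and a realization of the completion realizes the original.) For each pair I would set $\pi:=p_{\restriction L}(x)\wedge\psi(x)$ and, in analogy with $(\star)$, define
\[
S_\pi\ :=\ \Big\{\sigma : (N,\sigma)\ \text{realizes}\ \pi\Big\}\ \cup\ \Big\{\sigma : \pi\ \text{is not finitely consistent over}\ (N,\sigma)\Big\},
\]
where ``finitely consistent over $(N,\sigma)$'' means every finite subset of $\pi$ is realized in some $(U,\upsilon)\models T_0$ with $(N,\sigma)\subseteq(U,\upsilon)$. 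By the definition of smoothness, $\sigma$ is smooth exactly when it lies in $S_\pi$ for every pair, so the set of smooth expansions is $\bigcap_\pi S_\pi$; the lemma follows once each $S_\pi$ is shown to be open and dense.

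For openness I would treat the two sets separately. The first is open because ``$(N,\sigma)$ realizes $\pi$'' unwinds to $\bigcup_{a\,:\,N\models p_{\restriction L}(a)}[\psi(a)]_N$: whether a tuple $a$ realizes the $L$-type $p_{\restriction L}$ depends only on the $L$-reduct $N$, not on $\sigma$, while each $[\psi(a)]_N$ is basic open since $\psi$ is quantifier-free. The second set is open by the compactness argument of Lemma~\ref{lemmaec}: if $\pi$ is not finitely consistent over $(N,\sigma)$, some finite $p_0(x)\wedge\psi(x)$ with $p_0\subseteq p_{\restriction L}$ has no realization in any extension, so $\mathrm{Diag}(N,\sigma)\cup T_0\cup\{\E x\,(p_0(x)\wedge\psi(x))\}$ is inconsistent; compactness yields $\chi\in\mathrm{Diag}(N,\sigma)$ with $T_0\vdash\chi\to\neg\E x\,(p_0(x)\wedge\psi(x))$, and then $[\chi]_N$ is a neighbourhood of $\sigma$ contained in the second set, using that quantifier-free formulas persist to extensions.

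For density, fix a nonempty basic open $[\phi]_N$ and consider the theory $T_0\cup\{\phi\}\cup p_{\restriction L}(c)\cup\{\psi(c)\}$ for a fresh tuple of constants $c$. If it is consistent, I would take a model whose $L$-reduct is countable and saturated and, by q.e.\ in $L$, find an $L$-isomorphism $g$ onto $N$ fixing the parameters of $\phi$ and of $\pi$; transporting the expansion along $g$ produces $\upsilon^g\in[\phi]_N$ that realizes $\pi$, exactly as in Lemma~\ref{lemmaec}. If instead the theory is inconsistent, compactness gives a finite $p_0\subseteq p_{\restriction L}$ with $T_0\cup\{\phi\}\cup p_0(c)\cup\{\psi(c)\}$ inconsistent; since every $\sigma_0\in[\phi]_N$ and every extension of $(N,\sigma_0)$ modelling $T_0$ still satisfies the quantifier-free $\phi$, no such extension can realize $p_0\wedge\psi$, so $\pi$ fails to be finitely consistent over $(N,\sigma_0)$ and $\sigma_0$ already lies in the second set of $S_\pi$. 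In either case $[\phi]_N\cap S_\pi\neq\varnothing$.

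The main obstacle is conceptual rather than computational: getting the relativised notion of finite consistency right so that $S_\pi$ is genuinely open and dense. A naive reading in which $\pi$ is only required to be consistent with $T_0$ would fail, since a type such as $\{x=a\}\wedge\neg r(x)$ can be consistent with $T_0$ yet incompatible with a given $(N,\sigma)$ in which $r(a)$ holds; the disjunction with the ``not finitely consistent over $(N,\sigma)$'' set is precisely what absorbs this $\sigma$-dependence, mirroring the dichotomy in $(\star)$. The remaining care is the routine-but-essential reduction to countably many complete types, which is where smallness of $T$ and countability of $N$ enter.
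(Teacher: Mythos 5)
Your proof is correct and takes essentially the same approach as the paper's: the same decomposition of the smooth expansions into countably many sets of the form (type realized) $\cup$ (type not realizable in any $T_0$-extension) --- your second set is exactly the paper's set $C$, i.e.\ inconsistency of $\mathrm{Diag}(N,\sigma)\cup T_0\cup\{\E x\,[\xi(x)\wedge\psi(x)]:\xi\in p_{\restriction L}\}$ --- with the same compactness argument for openness and the same consistency dichotomy, transporting a countable $L$-saturated model onto $N$ by q.e., for density. The only (harmless) discrepancy is that you assert that $\bigcap_\pi S_\pi$ \emph{equals} the set of smooth expansions, whereas under the paper's reading of ``finitely consistent'' (realized finite subsets \emph{in} $(N,\sigma)$) one only gets the inclusion $\bigcap_\pi S_\pi\subseteq\{\sigma:\sigma\ \text{smooth}\}$, which is all the argument needs; the paper handles this same point by noting $C\subseteq B$ rather than claiming equality.
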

\begin{proof}
The set of smooth expansions is the intersection of sets of the form
$A\cup B$ where \medskip

\parbox{10ex}{\hfill A\ \ =\ \ }$\Big\{\sigma\ :\  (N,\sigma)\models\E
x\,[p_{\restriction L}(x)\wedge\psi(x)] \Big\}$,

\parbox{10ex}{\hfill B\ \ =\ \ }$\Big\{\sigma\ :\ p_{\restriction
L}(x)\wedge\psi(x) \textrm{ is not finitely consistent in
}(N,\sigma)\Big\}$,\medskip

\noindent and $p_{\restriction L}(x)\wedge\psi(x)$ range over the
types as in Definition~\ref{defsmooth}. As $T$ is small, there are
countably many of these sets. Let \medskip

\parbox{10ex}{\hfill C\ \ =\ \ }$\Big\{\sigma\ :\ \mathrm{Diag}(N,\sigma)\cup T_0\cup\big\{\E x\,[\xi(x)\wedge\psi(x)] : \xi(x)\in p_{\restriction L}(x)\big\}\textrm{ is inconsistent}\Big\}$, \medskip

\noindent and observe that $C \subseteq B$, so the lemma follows if we prove that $A\cup C$ is open
dense. 

For openness we argue as in Lemma~\ref{lemmaec}. For density, take
a basic open $[\phi]_N$ and consider the theory
$$
S\ =\ T_0\cup\{\phi\}\cup \Big\{\E x\,[\xi(x)\wedge\psi(x)]\ :\
\xi(x)\in p_{\restriction L}(x)\Big\}.
$$
If $S$ is inconsistent then $[\phi]_N$ is contained in $C$. Otherwise,
by compactness, $S$ has a model $(U,\upsilon)$ where $p_{\restriction
L}(x)\wedge\psi(x)$ has a solution $b$. As $U$ can be chosen to be
countable and $L$-saturated, by q.e.\@ there is an $L$-isomorphism
$g:U\to N$ that fixes the parameters of $p_{\restriction
L}(b)\wedge\phi\wedge \psi(b)$. Then $b$ is a solution of
$p_{\restriction L}(x)\wedge\psi(x)$ in $(N,\upsilon^g)$ as well, therefore $\upsilon^g\in
A\cap [\phi]_N$.
\end{proof}

We shall write $Y$ for the set of existentially closed smooth expansions of $N$. From Lemmas~\ref{lemmaec} and \ref{gamesmooth} we know that $Y$ is a comeagre subset of $\Exp(N,T_0)$.  We may regard $Y$ as a Polish space in its own right with the topology inherited from $\Exp(N,T_0)$. When $T$ is $\omega$-categorical, $Y$ is simply the set of e.c. models of $T_0$.

\section{Truss generic expansions}%
\label{truss}
The notation is as in~\ref{notation1} and~\ref{notation2}. When developing the results in this section we  originally had in mind the case when $T_0$ has a model companion $\Tmc$ which is a complete theory. These assumptions are motivated by the conditions described in~\cite{chpi} and they make the comparison between Truss generic and Lascar generic automorphisms rather neat. However, our results hold in the more general case where all existentially closed models of $T_0$ have the same existential theory, so this will be the underlying assumption.
If $\phi(x,y)$ is a quantifier-free formula in $L_0$ and $p(x,y)$ is a parameter-free type in $L$, then in every smooth model the infinitary formula $\E y\,[p(x,y)\wedge\phi(x,y)]$ is equivalent to a type. Infinitary formulas of this form are called \emph{existential quasifinite}.

Let $b$ be a finite tuple in $N$. For any $\alpha\in Y$ we define the \emph{1-diagram\/} of $\alpha$ at $b$
$$
\dg_{\restriction1}(\alpha, b)\ \ :=\ \ \big\{\phi(b)\ :\ \phi(x) \textrm{ is universal or existential and } (N,\alpha)\models \phi(b)\big\},
$$
and write \emph{$D_b$} for the set of 1-diagrams at $b$. On $D_b$ we define a topology whose basic open sets are of the form
$$
[\,\pi(b)\,]_D\ \ =\ \ \big\{\dg_{\restriction1}(\alpha,b)\ :\ (N,\alpha)\models \pi(b)\big\},
$$
where $\pi(x)$ is any existential quasifinite formula. When $\dg_{\restriction1}(\alpha,b)$ is an isolated point of $D_b$, we say that it is \emph{e-isolated} in $D_b$.

It is sometimes convenient to use the syntactic counterpart of $D_b$ which we now define. If $p(x)$ is a complete $L_0$-type, we write $p_{\restriction\A}(x)$, respectively $p_{\restriction\E}(x)$, for the set of universal, respectively existential, formulas in $p(x)$. We write $p_{\restriction1}(x)$ for $p_{\restriction\A}(x)\,\cup\, p_{\restriction\E}(x)$. We say that a type is realized in $Y$ if it is realized in some $(N,\sigma)$ with $\sigma\in Y$. Let $S_x^Y$ be the set of types of the form $p_{\restriction1}(x)$, where $p(x)$ is some complete parameter-free type realized in $Y$. On $S_x^Y$ define the topology where the basic open sets are of the form 
$$
[\,\pi(x)\,]_S\ \ =\ \ \Big\{ q_{\restriction1}(x) \ :\ \pi(x)\ \subseteq\ q(x)\Big\},
$$
where $\pi(x)$ is some existential quasifinite formula, and $q(x)$ ranges over the parameter-free types realized in $Y$. When $[\pi(x)]_S$ isolates $p_{\restriction 1}(x)$ in $S_x^Y$, we say that $p(x)$ is \emph{e-isolated} by $\pi(x)$.

\begin{lem}\label{homeomorphismDS} 
Let $b$ be a tuple in $N$ and let $p_{\restriction L}(x)$ be the parameter-free type of $b$ in the language $L$. There is a homeomorphism $h:D_b\ \imp\ [p_{\restriction L}(x)]_S$. For every existential quasifinte formula $\pi(x)$ containing $p_{\restriction L}(x)$, the image under $h$ of the set $[\pi(b)]_D$ is the set $[\pi(x)]_S$.
\end{lem}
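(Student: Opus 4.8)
The plan is to exhibit the natural candidate map and check that the two topologies match up under it. First I would define $h$ on a point $\dg_{\restriction1}(\alpha,b)$ of $D_b$ by sending it to the parameter-free type $p_{\restriction1}(x)$, where $p(x)=\tp_{(N,\alpha)}(b)$. Concretely, a universal or existential formula $\phi(b)$ lies in $\dg_{\restriction1}(\alpha,b)$ precisely when the corresponding formula $\phi(x)$ lies in $p_{\restriction1}(x)$, so $h$ is just the passage from the ``diagram at $b$'' to the ``parameter-free $1$-type'' by replacing the tuple $b$ with the variable $x$. Since $b$ realizes $p_{\restriction L}(x)$ and $(N,\alpha)\models T$, the type $p_{\restriction L}(x)$ is contained in $p_{\restriction1}(x)$ (every formula in $p_{\restriction L}$ is both universal and existential modulo $T$ by q.e.), so the image indeed lands in $[p_{\restriction L}(x)]_S$.

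Next I would verify that $h$ is a well-defined bijection. For well-definedness I need that $\dg_{\restriction1}(\alpha,b)$ determines $p_{\restriction1}(x)$ and conversely; this is immediate because the universal and existential formulas satisfied by $b$ in $(N,\alpha)$ are exactly the substitution instances of the universal/existential formulas in $p_{\restriction1}(x)$. Injectivity follows because two expansions giving the same $p_{\restriction1}(x)$ give, after substituting $b$ back, the same $1$-diagram. For surjectivity onto $[p_{\restriction L}(x)]_S$ I use that every $q_{\restriction1}(x)\in[p_{\restriction L}(x)]_S$ is by definition $p'_{\restriction1}(x)$ for some complete type $p'(x)$ realized by some tuple $b'$ in some $(N,\alpha)$ with $\alpha\in Y$; since $q\supseteq p_{\restriction L}$, the tuple $b'$ has the same $L$-type as $b$, so by q.e. and saturation of $N$ there is an $L$-automorphism of $N$ carrying $b'$ to $b$, and transporting $\alpha$ along it produces an expansion in $Y$ realizing the same $1$-type at $b$; this preimage is the desired $1$-diagram in $D_b$.

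Finally I would show $h$ is a homeomorphism by matching basic open sets, which is the content of the second sentence of the statement. Given an existential quasifinite $\pi(x)\supseteq p_{\restriction L}(x)$, the set $[\pi(b)]_D$ consists of those $\dg_{\restriction1}(\alpha,b)$ with $(N,\alpha)\models\pi(b)$, while $[\pi(x)]_S$ consists of those $q_{\restriction1}(x)$ with $\pi(x)\subseteq q(x)$. The point is that $(N,\alpha)\models\pi(b)$ holds if and only if $\pi(x)\subseteq\tp_{(N,\alpha)}(b)$, and because $\pi$ is existential quasifinite it is, in every smooth model, equivalent to a type, so membership of $\pi$ is determined by the $1$-type already; hence $h$ carries $[\pi(b)]_D$ exactly onto $[\pi(x)]_S$. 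Since these $\pi$ range over a basis for both topologies, $h$ is a homeomorphism.

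The step I expect to require the most care is surjectivity together with the transport of expansions in the third paragraph: one must be sure that the $L$-automorphism of $N$ matching $b'$ to $b$ lifts to a map on $\Exp(N,T_0)$ preserving membership in $Y$, i.e. preserving both existential closure and smoothness. This is where the hypothesis that $N$ is countable and saturated, together with q.e. in $L$, does the real work, and where one leans on the fact (from the discussion preceding the lemma) that in smooth models existential quasifinite formulas are genuinely equivalent to types so that the correspondence between satisfaction and type-containment is exact.
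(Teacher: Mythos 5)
Your proof is correct and takes essentially the same route as the paper's: the same substitution map $h$ (replace $b$ by $x$ in the $1$-diagram), with surjectivity obtained by taking a realization $(N,\sigma)\models q(a)$ with $\sigma\in Y$ and transporting $\sigma$ along an automorphism $g$ of $N$ carrying $a$ to $b$, which exists by homogeneity of the countable saturated $N$ together with q.e.\@ in $L$. The paper actually leaves the open-set correspondence implicit, so your additional verification (using smoothness to identify satisfaction of an existential quasifinite $\pi$ with type containment, and noting that conjugation preserves membership in $Y$) merely fills in details the paper takes for granted.
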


\begin{proof} Let $h$ be the map that takes $\dg_{\restriction1}(\alpha,b)$ to the type \\
\centerline{$\{\phi(x)\,:\,\phi(b)\in\dg_{\restriction1}(\alpha,b)\}$. }

Note that, by q.e. in $L$,\@ this type contains $p_{\restriction L}(x)$. It is clear that $h$ maps $D_b$ injectively to $S_x^Y$. For surjectivity, let $q(x)$ be a complete parameter-free type realized in $Y$, say $(N,\sigma)\models q(a)$ for some $\sigma\in Y$, and suppose that $q_{\restriction1}(x)$ belongs to $[\pi(x)]_S$. As $p_{\restriction L}(x)\subseteq q(x)$, there is an isomorphism $g:N\imp N$ such that $g(a)=b$. Then $q_{\restriction1}(x)$ is the image of $\dg_{\restriction1}(\sigma^g,b)$ under $h$. This proves surjectivity.\end{proof}

From this fact it is clear that $\dg_{\restriction1}(\alpha, b)$ is e-isolated in $D_b$ if and only if $p(x)$, the parameter-free type of $b$ in $(N,\alpha)$, is e-isolated. The following lemma is also clear.

\begin{lem}\label{generalnonsence1} 
Let $p(x)$ be a complete parameter-free type realized in $Y$ and let $\pi(x)$ be an existential quasifinite formula such that  $p_{\restriction L}(x)\subseteq\pi(x)\subseteq p(x)$. Then the following are equivalent:
\begin{itemize}
\item[1.] $p(x)$ is e-isolated by $\pi(x)$;
\item[2.] $\pi(x)\ \models\ p_{\restriction1}(x)$ holds in every $\sigma\in Y$.
\end{itemize}
\end{lem}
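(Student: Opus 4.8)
The plan is to prove the equivalence of the two conditions in Lemma~\ref{generalnonsence1} by unwinding the definitions of e-isolation through the homeomorphism established in Lemma~\ref{homeomorphismDS}. The statement asserts that, for a complete parameter-free type $p(x)$ realized in $Y$ and an existential quasifinite formula $\pi(x)$ sandwiched between $p_{\restriction L}(x)$ and $p(x)$, the type $p(x)$ is e-isolated by $\pi(x)$ exactly when the entailment $\pi(x)\models p_{\restriction1}(x)$ holds across all of $Y$. Since both conditions are phrased in terms of the behaviour of types realized in $Y$, the natural strategy is to translate the topological condition (1) directly into the semantic condition (2).

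First I would fix a witnessing realization: since $p(x)$ is realized in $Y$, pick $\sigma\in Y$ and a tuple $b$ with $(N,\sigma)\models p(b)$, and let $p_{\restriction L}(x)$ be the $L$-type of $b$. By Lemma~\ref{homeomorphismDS} the map $h\colon D_b\to[p_{\restriction L}(x)]_S$ is a homeomorphism carrying $[\pi(b)]_D$ to $[\pi(x)]_S$. Now unwind definition (1): saying $p(x)$ is e-isolated by $\pi(x)$ means $[\pi(x)]_S$ isolates the point $p_{\restriction1}(x)$ in $S_x^Y$, i.e. $[\pi(x)]_S=\{p_{\restriction1}(x)\}$. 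Spelling out the basic open set from the definition preceding the lemma, this says that for every complete parameter-free type $q(x)$ realized in $Y$ with $\pi(x)\subseteq q(x)$, we have $q_{\restriction1}(x)=p_{\restriction1}(x)$. This is precisely a reformulation in terms of which $1$-parts of types realized in $Y$ are compatible with $\pi$.

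The heart of the argument is then to match this up with condition (2). The entailment ``$\pi(x)\models p_{\restriction1}(x)$ holds in every $\sigma\in Y$'' should be read as: whenever $(N,\tau)\models\pi(a)$ for some $\tau\in Y$ and some tuple $a$, then $(N,\tau)\models\psi(a)$ for every $\psi(x)\in p_{\restriction1}(x)$. For the direction $(1)\Rightarrow(2)$, suppose $(N,\tau)\models\pi(a)$ with $\tau\in Y$, and let $q(x)$ be the complete parameter-free type of $a$ in $(N,\tau)$; then $q(x)$ is realized in $Y$ and $\pi(x)\subseteq q(x)$, so by (1) we get $q_{\restriction1}(x)=p_{\restriction1}(x)$, whence $a$ satisfies all of $p_{\restriction1}$. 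For the converse $(2)\Rightarrow(1)$, take any $q(x)$ realized in $Y$ with $\pi(x)\subseteq q(x)$, witnessed by $(N,\tau)\models q(a)$ with $\tau\in Y$; then $(N,\tau)\models\pi(a)$, so by (2) every formula of $p_{\restriction1}(x)$ holds at $a$, giving $p_{\restriction1}(x)\subseteq q(x)$. The reverse containment $q_{\restriction1}(x)\subseteq p_{\restriction1}(x)$ follows because all e.c. models of $T_0$ share the same existential theory (the standing assumption of this section), so the existential parts agree, and the universal parts agree by complementation on the shared existential theory; hence $q_{\restriction1}(x)=p_{\restriction1}(x)$, which is exactly (1).

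The main obstacle I anticipate is the symmetry argument in $(2)\Rightarrow(1)$: condition (2) delivers only the containment $p_{\restriction1}(x)\subseteq q_{\restriction1}(x)$, and upgrading this to equality is where the hypothesis that all e.c. models of $T_0$ have the same existential theory does the essential work. One must argue carefully that $p_{\restriction\E}(x)$ and $q_{\restriction\E}(x)$ coincide—this is where ``same existential theory'' is invoked to control which existential formulas any realization in $Y$ can satisfy—and then deduce agreement of the universal parts, since a universal formula lies in a type iff the negation of the corresponding existential formula is forced. This is the step that genuinely uses the section's blanket assumption rather than pure topology, and it is the point at which the proof must be most explicit.
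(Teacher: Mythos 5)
Your overall route is the correct one, and it is essentially what the paper intends: the paper gives no argument at all (the lemma is introduced with ``The following lemma is also clear''), the content being exactly your unwinding of e-isolation into the statement that every complete parameter-free type $q(x)$ realized in $Y$ with $\pi(x)\subseteq q(x)$ satisfies $q_{\restriction1}(x)=p_{\restriction1}(x)$, together with the passage between ``$\pi(x)\subseteq q(x)$'' and ``$(N,\tau)\models\pi(a)$'' for a realization $a$ of $q$ (one direction of this passage silently uses smoothness of the expansions in $Y$, to produce a single witness for the infinitary existential quantifier; worth a word, but harmless). Your direction $(1)\Rightarrow(2)$ is correct as written.

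The genuine flaw is in the step you yourself flag as doing ``the essential work''. The containment $q_{\restriction1}(x)\subseteq p_{\restriction1}(x)$ does \emph{not} follow from the standing assumption that all e.c.\@ models of $T_0$ have the same existential theory: that assumption concerns existential \emph{sentences} and puts no constraint whatsoever on which existential \emph{formulas} a particular tuple satisfies. If your mechanism were valid, it would apply already with $\pi(x)=p_{\restriction L}(x)$, so every type realized in $Y$ would be e-isolated by its $L$-part; then every expansion in $Y$ would be e-atomic, hence Truss generic by Theorem~\ref{trussatomicrich}, contradicting Example~\ref{autnocyles}. What actually closes the gap is pure complementation, using only that $p(x)$ and $q(x)$ are complete types: from (2) you already have $p_{\restriction1}(x)\subseteq q(x)$; now if some existential $\phi(x)\in q(x)$ were not in $p(x)$, completeness would put $\neg\phi(x)$ in $p(x)$, and $\neg\phi(x)$ is (equivalent to) a universal formula, hence lies in $p_{\restriction\A}(x)\subseteq p_{\restriction1}(x)\subseteq q(x)$, contradicting the consistency of $q(x)$; symmetrically, a universal formula in $q(x)$ missing from $p(x)$ would force its existential negation into $p_{\restriction1}(x)\subseteq q(x)$. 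Thus $q_{\restriction1}(x)=p_{\restriction1}(x)$, and the lemma needs no appeal to the shared existential theory at all; that hypothesis earns its keep elsewhere in the section (for instance, to start the back-and-forth in Theorem~\ref{atomicrichconjugated} and to guarantee that every existential quasifinite formula is realized in every $\alpha\in Y$), not here.
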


\begin{defin}\label{new atomic rich} Let $\alpha\in Y$. We say that $(N,\alpha)$ is an e-atomic model, or that $\alpha$ is \emph{e-atomic}, if for all finite tuples $b$ in $N$  the 1-diagram $\dg_{\restriction1}(\alpha, b)$ is e-isolated. 
\end{defin}

The notion of e-atomic is close to Ivanov's notion of $(A,\E)$-atomic in~\cite{ivanov}, Section 2. However, the context is different and a circumstantial comparison is not straightforward.
When all e.c. models of $T_0$ have the same existential theory, any existential quasifinite formula is realized in all $\alpha \in Y$. Therefore in this case an e-atomic expansion $(N,\alpha)$ realizes $p_{\restriction1}(x)$ if and only if $p(x)$ is e-isolated.

\begin{rem} As remarked in Section~\ref{baire}, when $T$ is $\omega$-categorical, every expansion is smooth. In this case, if the model companion $T_{\rm mc}$ of $T_0$ exists, the e-atomic expansions are exactly the atomic models of  $T_{\rm mc}$.
\end{rem}

\begin{thm}\label{atomicrichconjugated} Suppose that $N \models T$ is countable and saturated and that all e.c. models of $T_0$ have the same existential theory. Then any two e-atomic expansions of $N$ are conjugate.
\end{thm}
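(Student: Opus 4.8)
The plan is to build, by a back-and-forth construction, an $L$-automorphism $g$ of $N$ with $\alpha^g=\beta$; this is exactly what it means for the two e-atomic expansions $\alpha,\beta\in Y$ to be conjugate. I would construct increasing finite partial maps $g_0\subseteq g_1\subseteq\cdots$, enumerating $N$ on the domain side at even stages and on the range side at odd stages, so that $g=\bigcup_n g_n$ is a bijection $N\imp N$. The invariant to maintain is that if $g_n$ sends $\bar a$ to $\bar b$, then $\dg_{\restriction1}(\alpha,\bar a)=\dg_{\restriction1}(\beta,\bar b)$, i.e.\ the tuples have the same $1$-diagram. This is strong enough to conclude: the $1$-diagram contains all quantifier-free formulas (these are both universal and existential), so by q.e.\ in $L$ the tuples share a complete $L$-type---whence $g$ is an $L$-automorphism---and they satisfy the same quantifier-free $L_0$-formulas, whence $g:(N,\alpha)\imp(N,\beta)$ is an $L_0$-isomorphism, as required.

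For the base case I take $g_0=\varnothing$ and must check that $(N,\alpha)$ and $(N,\beta)$ satisfy the same universal and existential $L_0$-sentences. Both lie in $Y$ and are therefore e.c.\ models of $T_0$, so by hypothesis they have the same existential theory; taking negations, they agree on universal sentences as well. This is the one point where the assumption that all e.c.\ models of $T_0$ share an existential theory is used directly.

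The heart of the argument is the extension (forth) step. Suppose $g_n$ matches $\bar a$ with $\bar b$ and a new element $a'$ is to be added to the domain; set $p(x,y)=\tp_{(N,\alpha)}(\bar a,a')$. Since $\alpha$ is e-atomic, $\dg_{\restriction1}(\alpha,\bar a a')$ is e-isolated, so by Lemma~\ref{homeomorphismDS} the type $p$ is e-isolated by an existential quasifinite formula $\pi(x,y)=\E w\,[r(x,y,w)\wedge\psi(x,y,w)]$ with $p_{\restriction L}(x,y)\subseteq\pi\subseteq p$. I would then produce $b'$ with $(N,\beta)\models\pi(\bar b,b')$ as follows. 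For each finite $r_0\subseteq r$ the formula $\E y\,\E w\,\big[\bigwedge r_0(x,y,w)\wedge\psi(x,y,w)\big]$ is, after replacing its $L$-conjuncts by equivalent quantifier-free ones via q.e., a genuine existential $L_0$-formula; it holds of $\bar a$ (witnessed by $a'$ and the witness of $\pi$), hence lies in $\dg_{\restriction1}(\alpha,\bar a)=\dg_{\restriction1}(\beta,\bar b)$ and so holds of $\bar b$. Thus the type $r(\bar b,y,w)\cup\{\psi(\bar b,y,w)\}$ in the variables $(y,w)$ over the finite parameter set $\bar b$ is finitely consistent (indeed finitely satisfiable in $(N,\beta)$), and by smoothness of $(N,\beta)$ it is realized there by some $(b',c')$, giving $(N,\beta)\models\pi(\bar b,b')$. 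Now applying Lemma~\ref{generalnonsence1} to $\beta\in Y$ and this realization of $\pi$, the type $q=\tp_{(N,\beta)}(\bar b,b')$ satisfies $q_{\restriction1}=p_{\restriction1}$, i.e.\ $\bar a a'$ and $\bar b b'$ have the same $1$-diagram, so the invariant propagates. The back step is symmetric, using that $\beta$ is e-atomic and $(N,\alpha)$ is smooth.

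The main obstacle is precisely this extension step, where all three hypotheses must be marshalled at once: e-atomicity supplies the isolating existential quasifinite $\pi$; the shared $1$-diagram together with smoothness transfers realizability of $\pi$ from $\bar a$ in $(N,\alpha)$ to $\bar b$ in $(N,\beta)$---with smoothness essential to land the witness \emph{inside} $N$ rather than in a proper extension; and e-isolation certifies that a single witness of $\pi$ forces the entire $1$-diagram to agree. The only delicate bookkeeping is verifying that the approximating formulas genuinely lie in the $1$-diagram, so that equality of $1$-diagrams can be invoked.
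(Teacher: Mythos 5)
Your proof is correct and follows essentially the same route as the paper's: a back-and-forth argument on finite $1$-elementary partial maps (your ``equal $1$-diagram'' invariant), with the empty map handled by the shared existential theory of e.c.\ models, e-atomicity supplying the isolating existential quasifinite formula, smoothness realizing it on the other side, and Lemma~\ref{generalnonsence1} propagating the invariant. The only difference is that you spell out in detail the transfer step (finite approximations of $\pi$ as genuine existential $L_0$-formulas) that the paper compresses into one sentence.
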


\begin{proof}
Let $\alpha$ and $\beta$ be e-atomic. We prove the following claim: any finite $1$-elementary partial map $f:(N,\alpha)\imp(N,\beta)$ can be extended to an isomorphism, where a map is 1-elementary if it preserves existential and universal formulas. Since we assume all e.c. models to have the same existential theory, the empty map between existentially closed models is 1-elementary, so the theorem follows from the claim.

To prove the claim it suffices to show that for any finite tuple $b$ we can extend $f$ to some $1$-elementary map defined on $b$. The claim then follows by back and forth. Let $a$ be an enumeration of $\dom f$.  Then $\dg_{\restriction1}(\alpha, ab)$ is e-isolated in $D_b$, say by some existential quasifinite formula $\pi(v,x)$. Let $p(v,x) = \tp(a,b)$.  By fattening $\pi$ if necessary, we may assume that it contains $p_{\restriction L}(v, x)$. Since $\beta$ is smooth and $f$ is $1$--elementary, the type $\pi(fa,x)$ is realized in $\beta$, say by $c$. By Lemma~\ref{generalnonsence1}, $\pi(v,x)\ \models p_{\restriction1}(v,x)$ holds both in $\alpha$ and $\beta$, so $f \cup \{\langle b,c\rangle\}$ gives the required extension.
\end{proof}

\begin{thm}\label{atomicrichcomeagre} Suppose that $N \models T$ is countable and saturated and that all e.c. models of $T_0$ have the same existential theory.
If an e-atomic expansion of $N$ exists, then the set of e-atomic expansions is comeagre in $\Exp(N,T_0)$.
\end{thm}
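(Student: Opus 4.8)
The plan is to show that the e-atomic expansions form a dense $G_\delta$ inside the Baire space $Y$ of existentially closed smooth expansions, and then to transfer comeagreness to $\Exp(N,T_0)$. Since $Y$ is comeagre in $\Exp(N,T_0)$ by Lemmas~\ref{lemmaec} and~\ref{gamesmooth}, it is dense there; as any set nowhere dense in the dense subspace $Y$ is also nowhere dense in $\Exp(N,T_0)$, a set comeagre in $Y$ is comeagre in $\Exp(N,T_0)$. So it suffices to work inside $Y$. Enumerating the countably many finite tuples $b$ of the countable model $N$, I would write the set $E$ of e-atomic expansions as $E=\bigcap_b E_b$, where $E_b=\{\alpha\in Y:\dg_{\restriction1}(\alpha,b)\text{ is e-isolated in }D_b\}$; by Definition~\ref{new atomic rich} this intersection is exactly $E$. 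Using the remark following Lemma~\ref{homeomorphismDS}, membership in $E_b$ is equivalent to $p(x)=\tp_\alpha(b)$ being e-isolated. I would then show that each $E_b$ is open and dense in $Y$ and conclude by the Baire category theorem.

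For openness, suppose $\alpha\in E_b$, so $p(x)=\tp_\alpha(b)$ is e-isolated by some existential quasifinite $\pi(x)$ with $p_{\restriction L}(x)\subseteq\pi(x)\subseteq p(x)$. Writing $\pi(b)=\E y\,[q(b,y)\wedge\psi(b,y)]$ with $q$ an $L$-type and $\psi$ quantifier-free, choose a witness $c$ in $(N,\alpha)$. The quantifier-free $L_0$-formula $\psi(b,c)$ cuts out a basic open neighbourhood $[\psi(b,c)]_N\cap Y$ of $\alpha$; since $c$ still realizes the $L$-type $q(b,\cdot)$ in every expansion of $N$, every $\sigma$ in this neighbourhood satisfies $\pi(b)$. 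By Lemma~\ref{generalnonsence1}, $\pi(x)\models p_{\restriction1}(x)$ in every $\sigma\in Y$, so $p_{\restriction1}\subseteq\tp_\sigma(b)$; as $p_{\restriction1}$ decides every existential and universal formula, this forces $\tp_\sigma(b)_{\restriction1}=p_{\restriction1}$, which is e-isolated by $\pi$. Hence the whole neighbourhood lies in $E_b$.

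For density I would invoke the hypothesis that an e-atomic expansion $\alpha_0$ exists. Since e-atomicity is preserved under conjugation by $\Aut(N)$, the orbit $\{\alpha_0^g:g\in\Aut(N)\}$ consists of e-atomic expansions, so it is enough to prove this orbit dense in $Y$ (then $\text{orbit}\subseteq E\subseteq E_b$ forces $E_b$ dense). Given a nonempty basic open $[\phi]_N\cap Y$ with $\phi=\phi(a)$ quantifier-free and parameter-free as a formula $\phi(x)$, pick any $\tau\in Y$ with $(N,\tau)\models\phi(a)$ and put $p_{\restriction L}(x)=\tp_L(a)$. For each finite $p_0\subseteq p_{\restriction L}$ the existential sentence $\E x\,[p_0(x)\wedge\phi(x)]$ holds in $\tau$, hence in $\alpha_0$ because all e.c.\ models share the same existential theory; so $p_{\restriction L}(x)\wedge\phi(x)$ is finitely consistent in $(N,\alpha_0)$ and, by smoothness of $\alpha_0$, is realized by some $a''$. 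As $\tp_L(a'')=\tp_L(a)$ and $N$ is saturated hence homogeneous, there is $g\in\Aut(N)$ with $g(a'')=a$, whence $(N,\alpha_0^g)\models\phi(a)$, i.e.\ $\alpha_0^g\in[\phi]_N\cap Y$.

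Applying the Baire category theorem in the Polish space $Y$ then gives that $E=\bigcap_b E_b$ is a dense $G_\delta$, hence comeagre in $Y$ and therefore in $\Exp(N,T_0)$. I expect the main obstacle to be the density step: one must produce an e-atomic expansion inside a prescribed basic open set, and the difficulty is that the witness supplied by the open set carries the wrong $L$-type. Reconciling this requires using both the common existential theory (to import the witnessing existential sentence into $\alpha_0$) and smoothness (to re-realize it at a tuple of the correct $L$-type), after which homogeneity of $N$ converts the realization into the needed conjugate $\alpha_0^g$.
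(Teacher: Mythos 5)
Your proof is correct and follows essentially the same route as the paper's: both show the e-atomic expansions form a dense $G_\delta$ in $Y$ by decomposing into the sets $E_b$, proving each open via a quantifier-free witness $\psi(b,c)$ of the isolating existential quasifinite formula, and proving density by conjugating a given e-atomic expansion onto the prescribed parameters using smoothness, the common existential theory, and homogeneity of $N$. The only difference is that you spell out details the paper leaves implicit (the finite-consistency step behind realizing $p_{\restriction L}(x)\wedge\phi(x)$ in $\alpha_0$, and the transfer of comeagreness from $Y$ to $\Exp(N,T_0)$), which is harmless.
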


\begin{proof} We prove that the set of e-atomic expansions is a dense $G_\delta$ subset of $Y$, hence comeagre in $\Exp(N,T_0)$.

To prove density, let $\psi(x)$ be a parameter- and quantifier-free formula. Let $a\in N$ be such that $\psi(a)$ is consistent with $T_0$. We show that $(N,\alpha)\models\psi(a)$ for some e-atomic $\alpha$. Write $p_{\restriction L}(x)$ for the parameter-free type of $a$ in the signature $L$.  Let $\beta$ be any e-atomic expansion and let $c$ be a realization of $p_{\restriction L}(x)\wedge\psi(x)$ in $(N,\beta)$. Let $g$ be an automorphism of $N$ such that $g(c)=a$. Then $\alpha:=\beta^g$ is the required expansion. Hence the set of e-atomic expansions is dense.

We now prove that the set of e-atomic expansions is a $G_\delta$ subset of $Y$. Let $b$ be a finite tuple and denote by $X_b$ the set of expansions $\alpha \in Y$ such that  $\dg_{\restriction1}(\alpha,b)$ is e-isolated.  It suffices to prove that $X_b$ is an open subset of $Y$.

Let $\alpha \in X_b$ and let $[\pi_\alpha(b)]_D$ be the basic open subset of $D_b$ that isolates $\dg_{\restriction1}(\alpha,b)$. We may assume $\pi_\alpha(b)$ has the form $\E y\,[\,p_{\alpha \restriction L}(b,y)\wedge \phi_\alpha(b,y)\,]$. So let $a_\alpha$ be a witness of the existential quantifier. We have that $Y\cap [\phi_\alpha(b, a_\alpha)]_N\subseteq X_b$. It follows that
$$
Y\ \cap\ \bigcup_{\alpha\in X_b}\  [\phi_\alpha(b, a_\alpha)]_N\ \  =\ \  X_b.
$$
Hence $X_b$ is an open subset of $Y$.\end{proof}

In~\cite{truss}, a notion of generic automorphisms is introduced and a number of examples are given of countable $\omega$-categorical structures that have generic automorphisms. The following definition, which appears in~\cite{ivanov}, generalizes the notion of generic automorphisms to arbitrary expansions.

\begin{defin} We say that an expansion $\tau$ is \emph{Truss generic} if $\{\tau^g\;:\; g\in\Aut(N)\}$ is a comeagre subset of $\Exp(N,T_0)$.
\end{defin}

\begin{rem}\label{atmostonecomeagre} There is at most one comeagre subset of $\Exp(N,T_0)$ of the form $\{\tau^g\;:\; g\in\Aut(N)\}$. This is because any two sets of this form are either equal or disjoint, and two comeagre sets in a Baire space have nonempty intersection.
\end{rem}

\begin{thm}\label{trussatomicrich} Suppose that $N \models T$ is countable and saturated and that all e.c. models of $T_0$ have the same existential theory. Let $\alpha$ be any expansion in $\Exp(N,T_0)$. Then the following are equivalent:
\begin{itemize}
\item[1.] $\alpha$ is e-atomic;
\item[2.] $\alpha$ is Truss generic.
\end{itemize}
\end{thm}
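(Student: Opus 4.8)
The plan is to establish the two implications separately, leaning on Theorems~\ref{atomicrichconjugated} and~\ref{atomicrichcomeagre} and on the uniqueness of a comeagre orbit recorded in Remark~\ref{atmostonecomeagre}. For $(1)\IMP(2)$, suppose $\alpha$ is e-atomic. Being e.c., being smooth, and having all $1$-diagrams e-isolated are isomorphism invariants, so each conjugate $\alpha^g$ is again e-atomic and the orbit $\{\alpha^g:g\in\Aut(N)\}$ is contained in the set $E$ of e-atomic expansions; conversely Theorem~\ref{atomicrichconjugated} shows any two e-atomic expansions are conjugate, so $E$ is contained in the orbit of $\alpha$. Thus the orbit of $\alpha$ \emph{equals} $E$, and since $E$ is nonempty (it contains $\alpha$), Theorem~\ref{atomicrichcomeagre} makes it comeagre. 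Hence $\alpha$ is Truss generic.

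For $(2)\IMP(1)$ I would reduce everything to the existence of a \emph{single} e-atomic expansion. First note $\alpha\in Y$: the orbit of $\alpha$ is comeagre and, by Lemmas~\ref{lemmaec} and~\ref{gamesmooth}, so is $Y$, so the two meet; as $Y$ is conjugation-invariant, $\alpha\in Y$. Now suppose we have produced some e-atomic $\beta$. By the implication already proved, $\beta$ is Truss generic, so its orbit is comeagre; by Remark~\ref{atmostonecomeagre} there is at most one comeagre orbit, whence the orbit of $\alpha$ coincides with that of $\beta$. Then $\alpha=\beta^g$ for some $g$, and e-atomicity being conjugation-invariant, $\alpha$ is e-atomic. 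So the entire burden is to manufacture one e-atomic expansion from the hypothesis that some orbit is comeagre.

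For this I would use, for each finite tuple $b$, the evaluation map $\rho_b\colon Y\to D_b$, $\sigma\mapsto\dg_{\restriction1}(\sigma,b)$. Using smoothness and the fact that in smooth models an existential quasifinite formula is equivalent to a type, one checks that whether $(N,\sigma)\models\pi(b)$ depends only on $\dg_{\restriction1}(\sigma,b)$, so that $\rho_b^{-1}([\pi(b)]_D)=\{\sigma\in Y:(N,\sigma)\models\pi(b)\}$ is open; hence $\rho_b$ is continuous, and a witness argument of the kind used in Lemma~\ref{lemmaec} shows it is also open and surjective, so $D_b$ is a Baire space. Let $O$ be the comeagre orbit of the Truss generic $\alpha$. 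Its image $\rho_b(O)$ is analytic, hence has the Baire property, and the open-map argument forces $\rho_b(O)$ to be comeagre in $D_b$. But, writing $p_{\restriction L}$ for the $L$-type of $b$ and using that $N$ is saturated, $\rho_b(O)=\{\dg_{\restriction1}(\alpha,b'):b'\models p_{\restriction L}\}$, and a countable model realizes only countably many complete types; so $\rho_b(O)$ is a \emph{countable} comeagre subset of the Baire space $D_b$. Consequently the e-isolated points are dense in $D_b$, equivalently, via the homeomorphism of Lemma~\ref{homeomorphismDS} and Lemma~\ref{generalnonsence1}, the e-isolated types are dense in each $[p_{\restriction L}(x)]_S$.

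With e-isolated types dense in every $S_x^Y$, I would then build an e-atomic expansion by an atomic-model construction adapted to the present setting: a back-and-forth over the fixed universe of $N$ which at each step steers the $1$-diagram of the current tuple into an e-isolating basic open set, using smoothness to realize the needed $p_{\restriction L}\wedge\psi$ conditions and Lemma~\ref{lemmaec} to stay existentially closed. This yields a $\beta\in Y$ all of whose $1$-diagrams are e-isolated, completing the reduction. I expect the existence step to be the main obstacle: the openness and Baire property of $\rho_b$ must be verified carefully, and the omitting-types construction has to be performed \emph{inside} $Y$ over the fixed saturated $N$, rather than by a free Fra\"{\i}ss\'e-style amalgamation. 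This is exactly where smoothness and the equal-existential-theory hypothesis carry the argument.
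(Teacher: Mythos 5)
Your direction $(1)\IMP(2)$ is exactly the paper's argument: the orbit of $\alpha$ equals the set of e-atomic expansions (Theorem~\ref{atomicrichconjugated} plus conjugation-invariance), it is comeagre by Theorem~\ref{atomicrichcomeagre}, and Remark~\ref{atmostonecomeagre} identifies it with the Truss generics. For $(2)\IMP(1)$ you take a genuinely different route. The paper argues directly: for each complete parameter-free type $p(x)$ that is not e-isolated, the set of expansions in $Y$ omitting $p_{\restriction1}(x)$ is open dense (density by moving a counterexample tuple with an automorphism of $N$, openness via the e.c.\ ``resultant'' fact, Theorem~7.2.4 of~\cite{hodges}); hence the Truss generic omits every non-e-isolated type, which together with $\alpha\in Y$ gives e-atomicity. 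You instead reduce to manufacturing a single e-atomic expansion (a legitimate reduction, by uniqueness of the comeagre orbit), extract density of isolated points in each $D_b$ from the comeagre orbit by a topological argument on the evaluation map $\rho_b$, and then run an atomic-model construction. That last construction is precisely the implication $2\IMP1$ of the paper's Theorem~\ref{trussexistence}, whose proof does not use the present theorem, so your plan is not circular; in effect you prove Theorem~\ref{trussexistence} en route, which the paper instead derives afterwards.

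There are, however, two gaps in your topological core. First, the comeagreness of $\rho_b(O)$: ``analytic, hence Baire property'' is not available, because $D_b$ is not Polish --- it is second countable but not obviously metrizable or even Hausdorff, its basic opens being the positive sets $[\pi(b)]_D$. What saves the step is countability: in a second countable $T_0$ space every singleton is a countable intersection of basic opens and of complements of basic opens, so $\rho_b(O)$ is Borel and has the Baire property. For comeagreness itself the correct argument is: preimages of nowhere dense sets under a continuous open map are nowhere dense, so if $\rho_b(O)\cap V$ were meagre for some nonempty open $V\subseteq D_b$, then $O\cap\rho_b^{-1}(V)$ would be meagre in $Y$, contradicting that $O$ is comeagre in the Baire space $Y$; being non-meagre in every nonempty open set and having the Baire property, $\rho_b(O)$ is comeagre.

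Second, and more substantively: ``a countable comeagre subset of a Baire space has dense isolated points'' is a $T_1$ fact. In a $T_0$ space, a non-nowhere-dense singleton $\{d\}$ only yields a nonempty open $W\subseteq\overline{\{d\}}$ with $d\in W$, not that $\{d\}$ is open, and since the basic opens of $D_b$ are positive existential quasifinite sets, $T_1$ is not automatic. It is true, but proving it requires the same e.c.\ fact that powers the paper's own proof: if $e,d\in D_b$ and every existential formula in $e$ lies in $d$, then $e=d$, because an existential $\phi(b)\in d\smallsetminus e$ yields, by Theorem~7.2.4 of~\cite{hodges}, an existential $\psi(b)\in e$ with $\psi\imp\neg\phi$ throughout $Y$, whence $\phi\notin d$. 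Since $w\in\overline{\{d\}}$ exactly when every existential quasifinite (hence every existential) formula in $w$ lies in $d$, the specialization order on $D_b$ is trivial and $D_b$ is $T_1$; only then does your final inference close. So your route is viable and genuinely different, but it silently depends on a non-obvious property of $D_b$ --- interestingly, the very same Hodges fact that the paper uses explicitly in its direct omitting-types argument.
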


\begin{proof} Let $\alpha$ be e-atomic. By Theorem~\ref{atomicrichcomeagre}, the set $X$ of e-atomic expansions is comeagre. By Theorem~\ref{atomicrichconjugated}, and because $X$ is closed under conjugacy by elements of $\Aut(N)$, $X$ is of the form $\{\tau^g\,:\,  g\in\Aut(N)\}$ for any e-atomic $\tau$. By Remark~\ref{atmostonecomeagre}, $X$ is exactly the set of Truss generic expansions.

Conversely, let $\alpha$ be Truss generic. As smoothness and existential closure are guaranteed by Lemma~\ref{gamesmooth}, we only need to prove that $\alpha$ omits $p_{\restriction1}(x)$ for any complete parameter-free type $p(x)$ that is not e-isolated. It suffices to prove that the set of expansions in $Y$ that omit $p_{\restriction1}(x)$ is dense $G_\delta$ in $Y$, hence comeagre in $\Exp(N,T_0)$. Then some Truss generic expansion omits it and, as Truss generic expansions are conjugated, the same holds for $\alpha$.

Denote by $X_b$ the set of expansions in $Y$ that model $\neg p_{\restriction1}(b)$. The set of expansions in $Y$ that omit $p_{\restriction1}(x)$ is the intersection of $X_b$ as the tuple $b$ ranges over $N$. So it suffices to show that $X_b$ is open dense in $Y$.

First we prove density. Let $\psi(a,b)$ be a quantifier-free formula where $a$ and $b$ are disjoint tuples. We need to show that there is an expansion in $Y$ that models $\psi(a,b)\wedge\neg p_{\restriction1}(b)$. Let $q_{\restriction L}(z,x)$ be the parameter-free type of $a,b$ in the language $L$. Since $p(x)$ is not e-isolated, there is $\theta(x) \in p_{\restriction1}(x)$ such that $\psi(z,x) \wedge q_{\restriction L}(z,x) \wedge \neg \theta(x)$ is realised by some $a^\prime, b^\prime$ in some $\sigma \in Y$. There is an automorphism $g:N\imp N$ such that $g(a'\,b')=a\,b$. We conclude that $\psi(a,b)\wedge\neg p_{\restriction1}(b)$ holds in $(N,\sigma^g)$. 

Now we prove that $X_b$ is open in $Y$. Let $\sigma\in X_b$. We shall show that $\sigma$ belongs to a basic open set  contained in $X_b$. If $(N,\sigma)\models\neg p_{\restriction\A}(b)$ the claim is obvious, so suppose that $(N,\sigma)\models\neg\phi(b)$ for some  existential formula $\phi(x) \in p_{\restriction\E}(x)$. The expansions in $Y$ are existentially closed, hence (see, for instance, Theorem~7.2.4 in~\cite{hodges}) there is an existential formula $\psi(x)$ with $(N,\sigma) \models \psi(b)$, such that $\psi(x)\imp \neg\phi(x)$ holds for every $\tau\in Y$. Then $[\psi(b)]_N\subseteq X_b$ as required.\end{proof}

\begin{cor} Suppose that $T$ is $\omega$-categorical, $N$ is a countable model of $T$ and that $T_0$ has a model companion $\Tmc$ which is a complete theory. Then  an expansion $\alpha \in \Exp(N,T_0)$ is Truss generic if and only if it is an atomic model of $\Tmc$. \end{cor}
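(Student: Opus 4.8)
The plan is to derive the corollary from Theorem~\ref{trussatomicrich} together with the remark following Definition~\ref{new atomic rich}; the only real work is checking that the present $\omega$-categorical hypotheses supply the standing assumptions of those results. First I would verify the hypotheses of Theorem~\ref{trussatomicrich}. Since $T$ is $\omega$-categorical, its unique countable model is saturated by the Ryll--Nardzewski theorem, so the given countable $N\models T$ is countable and saturated. Moreover, as $T_0$ has a model companion $\Tmc$, the models of $\Tmc$ are exactly the existentially closed models of $T_0$; and since $\Tmc$ is complete they all have the same complete theory, hence a fortiori the same existential theory. Thus Theorem~\ref{trussatomicrich} applies and gives that $\alpha$ is Truss generic if and only if $\alpha$ is e-atomic.

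It then remains to identify e-atomicity with being an atomic model of $\Tmc$, which is the assertion of the remark following Definition~\ref{new atomic rich}. To make this precise I would first use $\omega$-categoricity twice: every expansion is automatically smooth, so $Y$ is precisely the class of e.c.\ models of $T_0$, i.e.\ the expansions $\alpha$ with $(N,\alpha)\models\Tmc$; and each parameter-free $L$-type occurring in an existential quasifinite formula is isolated by a single $L$-formula, so every existential quasifinite formula is equivalent in $N$ to an ordinary existential $L_0$-formula. By Lemma~\ref{generalnonsence1}, e-isolation of a complete type $p(x)$ then means exactly that some existential $L_0$-formula $\pi(x)$ satisfies $\pi(x)\models p_{\restriction1}(x)$ throughout $Y$.

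The step I expect to need the most care is matching this existential e-isolation with ordinary isolation in the type space of $\Tmc$. Here I would invoke model-completeness of $\Tmc$: modulo $\Tmc$ every formula is equivalent to an existential one, so the map $p\mapsto p_{\restriction1}$ is a homeomorphism of $S_x(\Tmc)$ onto $S_x^Y$, under which each clopen set $[\theta]$ of the Stone topology is, modulo $\Tmc$, one of the existential basic opens $[\pi(x)]_S$. Consequently $p(x)$ is e-isolated if and only if it is isolated in $S_x(\Tmc)$, and therefore $\alpha$ is e-atomic if and only if $(N,\alpha)$ is a model of $\Tmc$ realizing only isolated types, that is, an atomic model of $\Tmc$. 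Combining this equivalence with the one from Theorem~\ref{trussatomicrich} completes the proof.
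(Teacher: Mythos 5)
Your proof is correct and follows exactly the paper's intended route: the corollary is stated without proof as an immediate consequence of Theorem~\ref{trussatomicrich} combined with the remark following Definition~\ref{new atomic rich} (that under $\omega$-categoricity the e-atomic expansions are precisely the atomic models of $\Tmc$), which is your decomposition. The details you supply---saturation of $N$ via Ryll--Nardzewski, identifying $Y$ with the models of $\Tmc$, reducing existential quasifinite formulas to existential ones, and using model completeness to match e-isolation with isolation in the Stone space of $\Tmc$---are exactly what the paper leaves implicit.
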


Theorem 3.9 is related to Theorem 4.2.6 in~\cite{hodges1} and to Theorem 2.4 in~\cite{ivanov}. Theorem~\ref{trussexistence} below is incidental to the main motivation of this paper and it gives a necessary and sufficient condition for Truss generic expansions to exist under the assumptions on $T$ and $T_0$ underlying this section. As remarked by the anonymous referee, in the $\omega$-categorical case Theorem~\ref{trussexistence} follows from Theorems 1.2, 1.3 and 2.4 \cite{ivanov}. In particular, conditions 2 and 3 are equivalent to JEP and AAP in \cite{ivanov}.

\begin{thm}\label{trussexistence} Suppose that $N \models T$ is countable and saturated and that all e.c. models of $T_0$ have the same existential theory. 
The following are equivalent:
\begin{itemize}
\item[1.] Truss generic expansions of $N$ exist;
\item[2.] for every finite $b$, the isolated points are dense in $D_b$;
\item[3.] for every finite $x$, the isolated points are dense in $S_x^Y$.
\end{itemize}
\end{thm}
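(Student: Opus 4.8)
The plan is to route everything through two earlier results: the identification of Truss generic with e-atomic expansions (Theorem~\ref{trussatomicrich}), and the homeomorphism $D_b\cong[p_{\restriction L}(x)]_S$ of Lemma~\ref{homeomorphismDS}. Recall that $\alpha\in Y$ is e-atomic exactly when $\dg_{\restriction1}(\alpha,b)$ is an isolated point of $D_b$ for every finite $b$. Thus, via Theorem~\ref{trussatomicrich}, condition~1 says precisely that an e-atomic expansion exists, and I shall prove the cycle $1\IMP2$, $2\IFF3$, $2\IMP1$, taking for granted (as throughout this section) that $\Exp(N,T_0)$, and hence the comeagre subspace $Y$, is a nonempty Baire space.

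I would first dispatch $2\IFF3$, which is soft. Taking the existential block empty and the quantifier-free part trivial exhibits any complete $L$-type $p_{\restriction L}(x)$ as an existential quasifinite formula, so $[p_{\restriction L}(x)]_S$ is a basic open subset of $S^Y_x$; as every point of $S^Y_x$ extends a unique complete $L$-type (by q.e.\ in $L$ the quantifier-free $L$-formulas lie in $p_{\restriction1}$), the space $S^Y_x$ is the disjoint union of the clopen pieces $[p_{\restriction L}(x)]_S$ indexed by the countably many complete $L$-types of length $|x|$. Hence isolated points are dense in $S^Y_x$ iff they are dense in each piece, and by Lemma~\ref{homeomorphismDS} (applied with $b$ any realisation of $p_{\restriction L}$) this is equivalent to density of the isolated points in every $D_b$, which is condition~2.

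For $1\IMP2$ I would invoke Theorem~\ref{atomicrichcomeagre}: since an e-atomic, equivalently Truss generic, expansion exists, the e-atomic expansions form a comeagre, hence dense, set $X\subseteq Y$. Given a nonempty basic open $[\pi(b)]_D$ with $\pi(b)=\E y\,[\,p_{\restriction L}(b,y)\wedge\psi(b,y)\,]$, choose $\alpha_0\in Y$ realising it together with a witness $e$, so that $(N,\alpha_0)\models p_{\restriction L}(b,e)\wedge\psi(b,e)$. Density of $X$ inside the neighbourhood $[\psi(b,e)]_N\cap Y$ yields an e-atomic $\alpha$ there; since $(b,e)$ has the same $L$-type in every expansion of the fixed structure $N$, we still have $(N,\alpha)\models\pi(b)$, so $\dg_{\restriction1}(\alpha,b)\in[\pi(b)]_D$ and it is isolated because $\alpha$ is e-atomic. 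Thus the isolated points are dense in $D_b$.

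The remaining and only substantial implication is $2\IMP1$, a Baire-category construction. The set $X_b$ of $\alpha\in Y$ at which $\dg_{\restriction1}(\alpha,b)$ is isolated is open in $Y$ (this is shown in the proof of Theorem~\ref{atomicrichcomeagre} and does not presuppose the existence of an e-atomic expansion), so it suffices to prove each $X_b$ dense and then pick a point of $\bigcap_b X_b$; as $N$ is countable this is a countable intersection of dense open sets, hence comeagre and nonempty, and any such point is e-atomic, hence Truss generic by Theorem~\ref{trussatomicrich}. Density of $X_b$ rests on a \emph{projection lemma}: if $b$ is a subtuple of $d$ and $\tp_\alpha(d)$ is e-isolated, then so is $\tp_\alpha(b)$. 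I would prove this by existentially quantifying the coordinates of $d$ outside $b$ in an isolating formula $\pi(v,x)$ for $\tp_\alpha(d)$ (absorbing them into its own existential block, so that $\E v\,\pi(v,x)$ remains existential quasifinite) and then verifying through Lemma~\ref{generalnonsence1} that $\E v\,\pi(v,x)\models(\tp_\alpha b)_{\restriction1}$ in every $\sigma\in Y$; the verification reduces to the remark that a $1$-diagram contained in another must coincide with it, since a missing universal (resp.\ existential) formula would force its existential (resp.\ universal) negation into the smaller diagram. Granting the projection lemma, density of $X_b$ follows routinely: given a nonempty $[\phi(c)]_N\cap Y$, put $d=bc$, apply density of isolated points in $D_d$ to the basic open set $[\,p_{\restriction L}(d)\wedge\phi(c)\,]_D$ (nonempty because any expansion in $[\phi(c)]_N\cap Y$ lands in it) to obtain an isolated $\dg_{\restriction1}(\alpha,d)$ with $(N,\alpha)\models\phi(c)$, and project down to conclude $\alpha\in X_b\cap[\phi(c)]_N$. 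I expect this projection lemma to be the main obstacle: everything else is Baire bookkeeping or the soft identification of conditions~2 and~3, whereas the monotonicity of e-isolation under passage to subtuples---and dually the freedom to absorb parameters $c$ not occurring in $b$---is where the combinatorics of existential quasifinite formulas must be handled with care.
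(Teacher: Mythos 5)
Your proposal is correct, and while the soft parts track the paper, your proof of the crucial implication $2\IMP1$ is genuinely different. The paper handles $1\IMP3$ essentially as you handle $1\IMP2$ (an e-atomic expansion exists by Theorem~\ref{trussatomicrich}, and its realized types witness density of isolated points), and it declares $2\IFF3$ clear from Lemma~\ref{homeomorphismDS}; your decomposition of $S_x^Y$ into the clopen pieces $[p_{\restriction L}(x)]_S$ merely makes that explicit. For $2\IMP1$, however, the paper runs a Henkin-style construction: it builds the quantifier-free diagram of an e-atomic expansion by finite approximations, with even stages securing smoothness and existential closure and odd stages using density of isolated points in $D_b$ to make every realized $1$-type e-isolated. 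You instead stay inside the Baire space $Y$: you reuse the openness of $X_b$ from the proof of Theorem~\ref{atomicrichcomeagre} (correctly noting that this step never uses the existence of an e-atomic expansion), prove each $X_b$ dense via your projection lemma, and pick a point of the countable intersection. The projection lemma itself is sound: absorbing the extra coordinates and the old witnesses into a single existential block keeps the formula existential quasifinite, smoothness extracts genuine witnesses from satisfaction of $\E v\,\pi(x,v)$, and the entailment $\E v\,\pi(x,v)\models p_{\restriction1}(x)$ in every $\sigma\in Y$ follows because $p_{\restriction1}(x)\subseteq r_{\restriction1}(x,v)$, where $r=\tp_\alpha(b,c)$; Lemma~\ref{generalnonsence1} then converts this back into e-isolation. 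What each approach buys: the paper's construction avoids the projection lemma and is self-contained, at the cost of enumeration bookkeeping and of having to check that the limit diagram really defines an expansion of $N$ that is smooth and e.c.; your argument gets smoothness and existential closure for free by working in $Y$, and it actually proves slightly more, namely that under condition 2 the e-atomic expansions are comeagre, re-deriving the conclusion of Theorem~\ref{atomicrichcomeagre} without assuming existence beforehand. Two cosmetic remarks: the observation that containment of $1$-diagrams forces equality is really the content of direction $2\IMP1$ of Lemma~\ref{generalnonsence1} (it upgrades ``every $q_{\restriction1}\in[\E v\,\pi]_S$ contains $p_{\restriction1}$'' to equality) rather than of the entailment verification itself; and, as you flag, both your proof and the paper's tacitly assume $\Exp(N,T_0)\neq\varnothing$.
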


\begin{proof} The equivalence $2\IFF3$ is clear by Lemma~\ref{homeomorphismDS}. Since the existence of e-atomic models implies that isolated points are dense in $S_x^Y$, the implication $1\IMP3$ follows from Theorem~\ref{trussatomicrich}. To prove the converse we assume 2 and construct a set $\Delta$ which is the quantifier-free diagram of an e-atomic model.

The diagram $\Delta$ is defined by finite approximations. Assume that at stage $i$ we have a finite set $\Delta_i$ of  quantifier-free sentences with parameters in $N$ which is consistent with $T_0$. Below we define $\Delta_{i+1}$. The definition uses a fixed arbitrary enumeration of length $\omega$ of all types of the form $p_{\restriction L}(x)\cup\{\phi(x)\}$ with finitely many parameters in $N$ and where $\phi(x)$ is quantifier-free. This exists because $T$ is small by assumption.

If $i$ is even, consider the $i/2$-th type in the given enumeration. If this type is consistent with $T_0\cup\Delta_i$, let $c$ be such that $T_0\cup p_{\restriction L}(c)\cup\{\phi(c)\}$ holds for some expansion and define $\Delta_{i+1}:=\Delta_i\cup\{\phi(c)\}$. Otherwise let $\Delta_{i+1}:=\Delta_i$. If $i$ is odd, let $b$ be a tuple that enumerates all the parameters in $\Delta_i$. Recall that we have assumed 2, so there is an expansion $\alpha$ which models $\Delta_i$ and is such that $\dg_{\restriction1}(\alpha,b)$ is isolated in $D_b$, say by the type $\E y\,[p_{\restriction L}(b,y)\wedge\phi(b,y)]$ where $\phi(b,y)$ is quantifier-free. Let $a$ satisfy $p_{\restriction L}(b,x)\wedge\phi(b,x)$ and define $\Delta_{i+1}:=\Delta_i\cup\{\phi(b,a)\}$.

Let $(N,\alpha)$ be the model with diagram $\Delta$. We claim that even stages guarantee both smoothness and existential closure. Smoothness is clear. To prove existential closure observe that if $\phi(x)$ is a quantifier-free formula with parameters in $N$ that has a solution in some extension of $(N,\alpha)$, then in particular it is consistent with $T_0\cup\Delta_i$ for every $i$, so at some stage $\phi(c)$ is added to the diagram of $(N,\alpha)$. Odd stages ensure that every type $p_{\restriction 1}(x)$ realized in $(N,\alpha)$ is e-isolated, so 1 follows by Theorem~\ref{trussatomicrich}.\end{proof}

\begin{exa}\label{automorphismsrandomgraph}
Truss generic automorphisms of the random graph.  Let $L$ be the language of graphs and let $T$ be the theory of the random graph. Let $L_0$ and $T_0$ be as in Example~\ref{PAPA}.  The existence of Truss generic automorphisms of the random graph was first proved in~\cite{truss} and extended to generic tuples in~\cite{hhls}, essentially using~\cite{hrush2}. These proofs use amalgamation properties of finite structures.

In the case of the random graph we can give a precise description of the isolated tuples. It is known~\cite{kikyo} that $T_0$ has no model companion. However, since the class of e.c. models of $T_0$ has the joint embedding property, all e.c. models have the same existential theory, hence $T$ and $T_0$ satisfy the hypothesis of Theorem~\ref{trussexistence}. The existence of Truss generic automorphisms of the random graph follows by the proposition below and Theorem~\ref{trussexistence}. This proof is by no means shorter than the one in~\cite{hhls}, and it still uses~\cite{hrush2}.

\begin{proposition} Let $T$ be the theory of the random graph and let $N$ be a countable random graph. Let $L_0$ and $T_0$ be as in  Example~\ref{PAPA}. Then for every finite tuple $b$ in $N$, the e-isolated points in $D_b$ are dense.\end{proposition}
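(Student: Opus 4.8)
The plan is to exploit the fact that $T$, the theory of the random graph, is $\omega$-categorical, so that (by the remarks in Section~\ref{baire}) every expansion is smooth and $Y$ is just the class of e.c.\ expansions, while every existential quasifinite formula is equivalent to an ordinary existential $L_0$-formula. By Lemma~\ref{homeomorphismDS} it suffices to show that every nonempty basic open set $[\pi_0(b)]_D$ contains an e-isolated point, where $\pi_0(x)=\E y\,[\,p_{\restriction L}(x,y)\wedge\varphi_0(x,y)\,]$ is existential quasifinite. By Lemma~\ref{generalnonsence1}, producing an e-isolated point amounts to exhibiting an $\alpha\in Y$ with $(N,\alpha)\models\pi_0(b)$ together with a single quantifier-free formula $\pi(b)$ such that $\pi(b)\models p_{\restriction1}(b)$ holds in \emph{every} $\sigma\in Y$, where $p=\tp(b)$ in $(N,\alpha)$. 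The guiding idea is that such an $\alpha$ can be chosen so that $b$ lies inside a finite union of $\alpha$-cycles; then $\pi(b)$ is simply the (finite, quantifier-free) description of this cyclic block, recording the cycle lengths $\alpha^{n_i}(b_i)=b_i$, which iterates coincide, and all edges among the iterates.

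First I would construct such an $\alpha$. Fix $\alpha_0\in Y$ realizing $\pi_0(b)$, say $(N,\alpha_0)\models p_{\restriction L}(b,a_0)\wedge\varphi_0(b,a_0)$. Let $C$ be the finite set consisting of $b$, $a_0$, and all their $\alpha_0^{\pm1}$-iterates mentioned in $\varphi_0$; this carries a finite graph structure and a partial automorphism $\alpha_0\restriction C$. Here I invoke the key external input, Hrushovski's extension theorem~\cite{hrush2}: the partial automorphism extends to a genuine automorphism $\hat f$ of a finite graph $H\supseteq C$, and since $H$ is finite all $\hat f$-orbits are finite cycles. Embedding $(H,\hat f)$ into some model of $T_0$ (with $H$ a union of full cycles) and then passing to an e.c.\ extension preserves the quantifier-free diagram of $H$, so $H$ remains a union of cycles there. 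Finally, since $N$ is the countable random graph and any finite graph isomorphism extends to an automorphism, I transfer this e.c.\ expansion to $N$ by an $L$-isomorphism sending the copy of $b$ back to $b$; this yields $\alpha\in Y$ with $(N,\alpha)\models\pi_0(b)$ in which the orbit closure $B$ of $b$ is a finite union of $\alpha$-cycles. Let $\pi(b)$ be its quantifier-free description; then $p_{\restriction L}(x)\subseteq\pi(x)\subseteq p(x)$ as required by Lemma~\ref{generalnonsence1}.

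The main point is then to verify that $\pi(b)$ e-isolates $p$, and this is where the real work lies. Let $\sigma\in Y$ with $(N,\sigma)\models\pi(b)$, so that the orbit closure of $b$ in $(N,\sigma)$ is a cyclic block isomorphic to $B$ over $b$. Given an existential formula $\E y\,\psi(b,y)$, I would argue that whether it holds is decided by $\pi(b)$ alone. Suppose it holds in one such model; its witness together with the relevant iterates forms a finite graph-with-automorphism that includes the cyclic block $B$. Applying~\cite{hrush2} to the \emph{new} part, I close the witness up into further finite cycles without disturbing $B$ (consistency of $\psi(b,y)$ with $\pi(b)$ prevents $b$'s own cycles from lengthening), obtaining a finite $(H',\hat f')\supseteq B$ realizing $\psi(b,y)$. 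Because $B$ is a union of full cycles in both $(N,\sigma)$ and $(H',\hat f')$, it is closed under both automorphisms and the two agree on it, so the free amalgam of graphs over $B$ (adding no edges between the two new parts) carries a well-defined automorphism extending $\sigma$ and realizing $\psi(b,y)$. Since $\sigma$ is existentially closed, $\E y\,\psi(b,y)$ is therefore realized in $(N,\sigma)$ itself. Thus the existential type of $b$—and hence, by negation, its universal type—is forced by $\pi(b)$ uniformly across $Y$, using that all e.c.\ models share the same existential theory. By Lemma~\ref{generalnonsence1}, $\dg_{\restriction1}(\alpha,b)$ is e-isolated, and since $(N,\alpha)\models\pi_0(b)$ it lies in $[\pi_0(b)]_D$, establishing density.

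The hard part is the combinatorial heart of the second and third steps: both the reduction of an arbitrary finite configuration to a \emph{periodic} one and the subsequent free amalgamation rely on Hrushovski's theorem~\cite{hrush2} that finite partial automorphisms of finite graphs extend to automorphisms of finite graphs. Everything else—$\omega$-categoricity, homogeneity of $N$, and closure under e.c.\ extensions—is bookkeeping, but the passage to finite cyclic blocks is exactly the ingredient that is unavailable for a general small theory and is what makes the e-isolated points dense for the random graph.
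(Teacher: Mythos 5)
Your proposal is correct and follows essentially the same route as the paper: Hrushovski's extension theorem~\cite{hrush2} produces a finite substructure closed under the automorphism, its quantifier-free diagram serves as the isolating formula, and isolation is verified by free amalgamation over that substructure together with existential closure of the expansions in $Y$. The only cosmetic differences are that you isolate with the quantifier-free description of $b$'s orbit closure alone (rather than the full finite structure containing the existential witnesses, as the paper does), and that you invoke~\cite{hrush2} a second time in the verification step, where the paper simply amalgamates the two existentially closed expansions of $N$ directly over the common finite substructure $A$.
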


\begin{proof}By the main result in~\cite{hrush2}, for every finite subset $B$ of  the random graph $N$ there is a finite set $A$ such that $B\subseteq A\subseteq N$ and every partial isomorphism $g:N\imp N$ with $\dom g, \range g\subseteq B$ has an extension to an automorphism of  $A$.

Let $\psi(b)$ be any existential formula consistent with $T_0$. Let $(N,\alpha)$ be a model that realizes $\psi(b)$. We shall show that $[\psi(b)]_D$ contains an isolated point. By the result in~\cite{hrush2} mentioned above, there is a model $(N,\sigma)$ which has a finite substructure $(A,\sigma\restriction A)$ that models $\psi(b)$. We may assume that $\sigma$ is existentially closed. Let $\phi(a,b)$ be the quantifier-free diagram of $A$ in $(N,\sigma)$. We claim that $\E z\,\phi(z,b)$ isolates a point of $D_b$, namely $\dg_{\restriction1}(\sigma,b)$.

To prove the claim, let $\tau\in Y$ model $\E z\,\phi(z,b)$ and prove that $(N,\tau)\equiv_{1,b}(N,\sigma)$. As $\phi(a,b)$ is the diagram of a substructure we can assume  that $(N,\tau)$ and $(N,\sigma)$ overlap on $A$. Since both $\sigma$ and $\tau$ are existentially closed and can be amalgamated over $A$, they are $1$-elementarily equivalent.\end{proof}
\end{exa}

\begin{exa}\label{autnocyles} Cycle-free automorphisms of the random graph. Let $L$, $T$, $N$, and $L_0$ be as in Example~\ref{automorphismsrandomgraph}.  The theory $T_0$ says that $f$ is an automorphism with inverse $f^{-1}$, and moreover for every positive integer $n$ it contains  the axiom $\A x\,f^nx\neq x$. These axioms claim that $f$ has no finite cycles. It is known~\cite{kumac} that $T_0$ has a model companion. Now we prove that there is no Truss generic expansion in $\Exp(N, T_0)$.

Suppose for a contradiction that some expansion $(N, \tau)$ is Truss generic. Let $b$ be an element of $N$. As $T$ is $\omega$-categorical, existential quasifinite formulas are equivalent to existential formulas. So, by Theorem~\ref{trussexistence}, there is an existential formula $\phi(b)$ that isolates $\dg_{\restriction1}(\tau, b)$ in $D_b$. As the symbol $f^{-1}$ can be eliminated at the cost of a few extra existential quantifiers, we may assume that it does not occur in $\phi(b)$. Let $n$ be a positive integer which is larger than the number of occurrences of the symbol $f$ in $\phi(b)$. Denote by $f_\tau$ the interpretation of $f$ in $(N,\tau)$. Let $A\subseteq N$ be a finite set containing $b$ and such that the sets $\{c,f_\tau c,\dots,f_\tau^{n-1}c\}$, for $c\in A$, are pairwise disjoint and let $B$ be the union of all these sets. Clearly we can choose $A$ such that $B$ contains witnesses of all the existential quantifiers in $\phi(b)$. The latter requirement guarantees that if $\alpha$ is an expansion such that $\alpha\restriction B=\tau\restriction B$, then $(N,\alpha)\models\phi(b)$. Define $d:=f_\tau^nb$ and $e:=f_\tau d$. Let $e'\in N$ realize the type $\tp_{\restriction L}(e/f_\tau[B])$ and be such that $r(b,e)\nleftrightarrow r(b,e')$. As $b\notin f_\tau[B]$, the theory of the random graph ensures the existence of such an $e'$. Let $g:=f_\tau\restriction B \cup\{\langle d,e'\rangle\}$. We claim that $g:N\imp N$ is a partial isomorphism. To prove the claim it suffices to check that $r(a,d)\iff r(ga,e')$ for every $a\in B$. We know that $r(a,d)\iff r(ga,e)$. As $ga\in f_\tau[B]$, by the choice of $e'$ we have $r(ga,e)\iff r(ga,e')$. Then $r(a,d)\iff r(ga,e')$ follows. Finally, it is easy to see that the homogeneity of $N$ yields an extension of $g$ to a cycle-free automorphism of $N$, hence an expansion $\alpha$. By construction, $\alpha\restriction B=\tau\restriction B$ so, as observed above, $(N,\alpha)\models\phi(b)$. But $(N,\tau)$ and $(N,\alpha)$ disagree on the truth of $r(b,f^{n+1}b)$. This contradicts that $\phi(b)$ isolates $\dg_{\restriction1}(\tau, b)$.\end{exa}

Example~\ref{autnocyles} shows that the existence of the model companion of $T_0$ is not sufficient to guarantee the existence of Truss generic expansions. The following corollary of Theorem~\ref{trussexistence} gives a sufficient condition.

\begin{cor}\label{saturatedcountablerichatomic}  Suppose that $T_0$ has a complete model companion $\Tmc$ which is small. Then $N$ has a Truss generic expansion.
\end{cor}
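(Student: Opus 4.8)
The plan is to derive the corollary from Theorem~\ref{trussexistence}. First I would check its hypothesis: since $\Tmc$ is a model companion of $T_0$, the e.c.\ models of $T_0$ are exactly the models of $\Tmc$, and as $\Tmc$ is complete these are all elementarily equivalent, so in particular they share the same existential theory. Thus Theorem~\ref{trussexistence} applies, and it suffices to verify its condition~3: for every finite $x$ the isolated points are dense in $S_x^Y$. The strategy is to show that $S_x^Y$ is a countable Hausdorff Baire space, because every such space has a dense set of isolated points --- if a nonempty open $U$ contained no isolated point, then each singleton $\{p\}$ with $p\in U$ would be closed (Hausdorff) and nowhere dense in $U$, so the open subspace $U$ would be meager in itself, contradicting the Baire property.

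Countability and Hausdorffness come directly from smallness and model-completeness of $\Tmc$. Each point of $S_x^Y$ is the reduct $p_{\restriction1}(x)$ of a complete parameter-free $L_0$-type $p(x)$ consistent with $\Tmc$, and there are only countably many such $p$ by smallness. For Hausdorffness I would use that $\Tmc$, being a model companion, is model-complete, so every $L_0$-formula is $\Tmc$-equivalent to an existential one; hence a complete type $p(x)$ of $\Tmc$ is determined by $p_{\restriction\E}(x)$, and so by $p_{\restriction1}(x)$. Therefore $p_{\restriction1}(x)\mapsto p(x)$ is a well-defined injection of $S_x^Y$ into the Stone space $S_n(\Tmc)$ of complete $L_0$-types, and it is continuous: the preimage of a Stone-basic-open $[\theta(x)]$ is $[\theta'(x)]_S$, where $\theta'$ is an existential formula $\Tmc$-equivalent to $\theta$. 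A continuous injection into a Hausdorff space has Hausdorff domain.

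It remains to prove $S_x^Y$ is Baire. Using Lemma~\ref{homeomorphismDS} I would write $S_x^Y$ as the topological sum of the clopen pieces $[p_{\restriction L}(x)]_S\cong D_b$ (each $L$-type is existential quasifinite, hence its set is open, and the distinct $L$-types give pairwise disjoint pieces covering $S_x^Y$); since a topological sum of clopen Baire spaces is Baire, it is enough to show that each $D_b$ is Baire. For this I would exhibit the map $\rho_b\colon Y\to D_b$, $\alpha\mapsto\dg_{\restriction1}(\alpha,b)$, as a continuous open surjection from the Polish space $Y$, and invoke the fact that a continuous open image of a Baire space is Baire. Surjectivity is the definition of $D_b$, and continuity is immediate: the preimage of $[\,\E y\,(p_{\restriction L}(b,y)\wedge\phi(b,y))\,]_D$ is $\bigcup\{[\phi(b,c)]_N\cap Y: N\models p_{\restriction L}(b,c)\}$, the $L$-condition on $c$ being independent of $\alpha$.

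The main obstacle, and the one step needing genuine work, is the openness of $\rho_b$. Given a basic open $[\psi(b,e)]_N\cap Y$ of $Y$ and $\alpha$ inside it, I would set $\pi(b)=\E z\,[\,r(b,z)\wedge\psi(b,z)\,]$ with $r(x,z)=\tp_{\restriction L}(b,e)$, so that $\dg_{\restriction1}(\alpha,b)\in[\pi(b)]_D$, and claim that $[\pi(b)]_D$ lies in the image. Indeed, if $\beta\in Y$ realizes $\pi(b)$, witnessed by some $e'$ with $\tp_{\restriction L}(b,e')=r=\tp_{\restriction L}(b,e)$, then homogeneity of the saturated model $N$ yields $g\in\Aut(N)$ fixing $b$ with $g(e')=e$; the conjugate $\beta^g$ then lies in $[\psi(b,e)]_N\cap Y$ (as $Y$ is $\Aut(N)$-invariant) and satisfies $\dg_{\restriction1}(\beta^g,b)=\dg_{\restriction1}(\beta,b)$ because $g$ fixes $b$. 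This realizes every point of $[\pi(b)]_D$ as an image of a point of the given basic open, establishing openness and completing the deduction of condition~3, whence a Truss generic expansion exists.
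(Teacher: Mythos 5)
Your proof is correct, and it reaches the one substantive point of the corollary --- density of the isolated points in $S_x^Y$, i.e.\ condition~3 of Theorem~\ref{trussexistence} --- by a genuinely different route from the paper. The paper's proof is a cardinality argument: by model completeness of $\Tmc$ every $L_0$-formula is $\Tmc$-equivalent to an existential one, so $S_x^Y$ is identified with the space of complete parameter-free types consistent with $\Tmc$, and then the classical tree argument for small theories (Theorem 4.2.11 of~\cite{marker}) applies --- if some nonempty open set contained no isolated point, a binary tree of consistent existential formulas would produce continuum many types, contradicting smallness; the only caveat, which model completeness disposes of, is that the topology is generated by existential quasifinite formulas rather than by all formulas. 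You instead run a Baire category argument: smallness gives countability of $S_x^Y$, model completeness gives Hausdorffness via the continuous injection $p_{\restriction1}\mapsto p$ into the Stone space of $\Tmc$, and Baireness of each clopen piece $[p_{\restriction L}(x)]_S\cong D_b$ is transferred from $Y$ along the map $\rho_b:\alpha\mapsto\dg_{\restriction1}(\alpha,b)$, which you correctly show to be a continuous open surjection (the image of a basic open $[\psi(b,e)]_N\cap Y$ is exactly $[\,\E z\,(r(b,z)\wedge\psi(b,z))\,]_D$, using homogeneity of the saturated $N$ and $\Aut(N)$-invariance of $Y$); a countable T1 Baire space has dense isolated points. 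Each approach has its advantages: the paper's is shorter and leans on a textbook theorem, while yours is self-contained, needs only an injection of $S_x^Y$ into $S_n(\Tmc)$ rather than the paper's implicit identification of $S_x^Y$ with the full Stone space of $\Tmc$ (which requires realizing every $\Tmc$-type in a smooth e.c.\ expansion of $N$), and the openness of $\rho_b$ --- identifying the topology of $D_b$ as the push-forward of that of $Y$ --- is a fact of independent interest that the paper never isolates. Both proofs use smallness and model completeness of $\Tmc$ in an essential way.
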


\begin{proof} Modulo $\Tmc$ every formula is equivalent to an existential (or, equivalently, to a universal) one. Then $S_x^Y$ is the set of all complete parameter-free types consistent with $\Tmc$. Though the topology on $S_x^Y$ is not the standard one, the usual argument (e.g.\@ Theorem 4.2.11 of~\cite{marker}) suffices to prove that the isolated types are dense.\end{proof}

\end{document}